\newcommand{\excise}[1]{}
\newtheorem{thm}{Theorem}[section]
\newtheorem{lemma}[thm]{Lemma}
\newtheorem{cor}[thm]{Corollary}
\newtheorem{prop}[thm]{Proposition}
\newtheorem{conj}[thm]{Conjecture}
\newtheorem{prob}[thm]{Problem}
\theoremstyle{definition}
\newtheorem{alg}[thm]{Algorithm}
\newtheorem{example}[thm]{Example}
\newtheorem{remark}[thm]{Remark}
\newtheorem{defn}[thm]{Definition}
\newtheorem{notation}[thm]{Notation}
\numberwithin{equation}{section}
\newcommand{\ring}[1]{\ensuremath{\mathbb{#1}}}
\renewcommand\>{\rangle}
\newcommand\<{\langle}
\newcommand\QQ{\ring{Q}}
\newcommand\RR{\ring{R}}
\newcommand\ZZ{\ring{Z}}
\newcommand\cW{{\mathcal W}}
\DeclareMathOperator\Ap{Ap} 
\def\EE{{\mathbb E}}
\def\HH{{\mathbb H}}
\def\cF{{\mathcal F}}
\def\cN{{\mathcal N}}
\def\cS{{\mathcal S}}
\def\cE{{\mathcal E}}
\def\orbit{\operatorname{orbit}}
\def\codim{\operatorname{codim}}
\begin{document}

\mbox{}
\title{Wilf's conjecture in fixed multiplicity}

\author[Bruns]{Winfried Bruns}
\address{Institut f\"ur Mathematik\\Universit\"at Osnabr\"uck\\49074 Osnabr\"uck, Germany}
\email{wbruns@uos.de}

\author[Garc\'ia-S\'anchez]{Pedro Garc\'ia-S\'anchez}
\address{Departamento de \'Algebra\\Universidad de Granada\\18071 Granada, Espa\~na}
\email{pedro@ugr.es}

\author[O'Neill]{Christopher O'Neill}
\address{Mathematics Department\\San Diego State University\\San Diego, CA 92182}
\email{cdoneill@sdsu.edu}

\author[Wilburne]{Dane Wilburne}
\address{Department of Mathematics and Statistics\\York University\\Toronto, ON, Canada}
\email{drw@yorku.ca}

\date{\today}

\begin{abstract}
We give an algorithm to determine whether Wilf's conjecture holds for all numerical semigroups with a given multiplicity $m$, and use it to prove Wilf's conjecture holds whenever $m \le 18$.  Our algorithm utilizes techniques from polyhedral geometry, and includes a parallelizable algorithm for enumerating the faces of any polyhedral cone up to orbits of an automorphism group.  We also introduce a new method of verifying Wilf's conjecture via a combinatorially-flavored game played on the elements of a certain finite poset.  
\end{abstract}

\maketitle

\section{Introduction}
\label{sec:intro}

In what follows, let $\ZZ_{\ge 0}$ denote the set of non-negative integers.  

A \emph{numerical semigroup} $S$ is a subset of $\ZZ_{\ge 0}$ containing $0$ that is closed under addition and has finite complement in $\ZZ_{\ge 0}$ (this last condition is equivalent to requiring that the greatest common divisor of its elements is $1$).  Forty years ago~\cite{wilf}, Wilf conjectured that every numerical semigroup $S$ satisfies an inequality involving the following basic quantities:
\begin{itemize}
\item 
the \emph{conductor} of $S$, denoted $\operatorname c(s)$, which is the smallest integer $c$ such that $c + \ZZ_{\ge 0} \subset S$ (this is guaranteed to exist since $S$ has finite complement in $\ZZ_{\ge 0}$);

\item 
the number $\operatorname n(S)$ of elements of $S$ less than $\operatorname c(S)$ (called \emph{sporatic elements}); and

\item 
the \emph{embedding dimension} of $S$, denoted $\operatorname e(S)$, which equals the number of positive elements of $S$ that cannot be written as a sum of two positive elements of $S$ (called the \emph{atoms} or \emph{primitive elements} of $S$).  

\end{itemize}

One can think of Wilf's conjecture, stated below, as a bound on the ``density'' of the sporatic elements in terms of the number of primitive elements.  The problem has attracted the attention of many researchers, in part because of how easy it is to state.  Despite substantial progress, much of which has been made in this century, Wilf's conjecture remains open.  

\begin{conj}[Wilf]\label{conj:wilf}
For any numerical semigroup $S$, 
\[
\operatorname c(S) \le \operatorname e(S) \operatorname n(S).
\]
\end{conj}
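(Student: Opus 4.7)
The plan is to attack Wilf's conjecture by fixing the multiplicity $m(S) = \min(S \setminus \{0\})$ of the semigroup and reducing to a finite polyhedral check. The full conjecture is famously open, so what I can realistically set up is a framework that, for each fixed $m$, terminates with either a proof or an explicit counterexample; this is the setup under which one can hope to establish the conjecture for all $m$ up to some threshold.

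Fix $m \ge 2$. For a numerical semigroup $S$ with $m(S) = m$, its Apéry set $\mathrm{Ap}(S,m) = \{a_0, a_1, \ldots, a_{m-1}\}$, where $a_i$ is the smallest element of $S$ congruent to $i$ modulo $m$, satisfies $a_0 = 0$ and determines $S$ completely. Writing $a_i = k_i m + i$, closure of $S$ under addition translates into a finite system of linear inequalities on $(k_1, \ldots, k_{m-1}) \in \ZZ_{\ge 0}^{m-1}$, cutting out the rational Kunz polyhedron $P_m$ whose integer points biject with the numerical semigroups of multiplicity $m$. Each of $\operatorname c(S)$, $\operatorname n(S)$, and $\operatorname e(S)$ is piecewise linear in the tuple $(k_1, \ldots, k_{m-1})$, with the linear piece determined by the minimal face $F$ of $P_m$ (or a suitable refinement) that contains the tuple.

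Given this, Wilf's inequality on the relative interior of a face $F$ reduces to a bilinear inequality in the coordinates, which can be checked face by face. The algorithm I would implement is: (i) enumerate the faces of $P_m$ up to the natural action of $(\ZZ/m\ZZ)^\times$ (which permutes residue labels), in parallel; (ii) on each orbit representative $F$, read off the linear expressions for $\operatorname c$, $\operatorname n$, $\operatorname e$; and (iii) certify or refute $\operatorname c(S) \le \operatorname e(S) \operatorname n(S)$ on all integer points of $F$ by maximizing $\operatorname c(S) - \operatorname e(S) \operatorname n(S)$ subject to the defining inequalities of $F$.

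The main obstacle is the combinatorial explosion of the face lattice of $P_m$: even with the $(\ZZ/m\ZZ)^\times$ symmetry and parallelism, the number of orbit representatives grows rapidly with $m$, and this is what will ultimately cap how large an $m$ the method can reach. A secondary obstacle is that the polyhedral check can be slack on faces where $\operatorname e(S)$ is small relative to $m$; on such faces I anticipate needing a supplementary combinatorial argument---for instance, a poset-based ``game'' as hinted in the abstract---to close the gap, and isolating the right combinatorial invariant there is likely to be the most technically delicate step.
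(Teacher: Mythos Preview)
Your framework is essentially the paper's: encode semigroups of multiplicity $m$ via Kunz coordinates, stratify by faces of the Kunz polyhedron, exploit the $(\ZZ/m\ZZ)^*$-action to enumerate orbits of faces, and then test Wilf's inequality face by face, with a poset-based combinatorial game as a complementary tool. One sharpening worth noting is that $\operatorname e(S)$ (and $\operatorname t(S)$) are in fact \emph{constant} on the interior of each face (the paper's Corollary~\ref{c:withinfaces}), so the face-by-face check is a genuinely linear feasibility problem rather than a bilinear one; this is what makes step~(iii) of your plan tractable.
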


The original aim of this project was to develop a computational method for verifying Wilf's conjecture for all numerical semigroups $S$ with fixed $\operatorname m(S) = \min(S \setminus \{0\})$, called the \emph{multiplicity} of~$S$ (note there are infinitely many numerical semigroups with each fixed multiplcity $\operatorname m(S) \ge 2$).  Our results, however, have more far-reaching consequences than merely verifying the conjecture in some new cases:\ we provide new tools from polyhedral geometry and enumerative combinatorics with which to approach the conjecture.  

Our main tool is the Kunz polyhedron $P_m$, introduced independently in~\cite{kunz} and~\cite{kunz-coordinates}, whose integer points are in one to one correspondence with numerical semigroups with multiplicity $\operatorname m(S) = m$ (the coordinates of these points are known as the Kunz coordinates of the semigroup).  The points in the interior of $P_m$ translate to a class of numerical semigroups for which Wilf's conjecture is known to hold~\cite{kunz-coordinates}, namely those with \emph{maximal embedding dimension}.  For each face $F$ of $P_m$, we reduce the task of checking Wilf's conjecture for all Kunz coordinates in the interior of $F$ to the problem of determining if a certain set of linear inequalities has integer solutions.  The primary computational hurdle in verifying Wilf's conjecture for a given multiplicity $m$ then becomes enumerating the faces of the Kunz polyhedron, which for $\operatorname m(S) \ge 13$ is a challenging computation.  

The primary contributions of this manuscript are as follows.  

\begin{itemize}
\item 
We present an algorithm for enumerating the faces of any polyhedral cone under the action of an automorphism group.  The details of our algorithm, including its implementation in the software package \texttt{Normaliz}~\cite{Normaliz}, can be found in Section~\ref{sec:facelattice}.  Although this work is motivated by the computation of the Kunz polyhedron, our algorithm, as well as its implementation in \texttt{Normaliz}, is not limited to this case.  

\item 
We prove that in the interior of any face of the Kunz polyhedron, the Kunz coordinates of all numerical semigroups violating Wilf's conjecture are determined by a system of linear inequalities (Section~\ref{sec:algorithm}).  Through a series of reductions, several of which result from special cases in which Wilf's conjecture is known to hold, we verify that Wilf's conjecture holds for all numerical semigroups $S$ with multiplicity $\operatorname m(S) \le 18$.  

\item 
We demonstrate that the unbounded faces of the Kunz polyhedron containing integer points are indexed by a family of finite partially ordered sets, called \emph{Ap\'ery posets} (Section~\ref{sec:kunz}), and introduce a combinatorial game played on the elements of a given Ap\'ery poset whose outcome yields a method of checking if all numerical semigroups with Kunz coordinates interior to the corresponding face satisfy Wilf's conjecture.  Section~\ref{sec:game} contains a description of the game, along with several examples of its use.  

\end{itemize}

Prior to this work, Wilf's conjecture was known to hold for $\operatorname m(S) \le 10$ by assembling several special cases.  At a talk in the summer of 2017, Eliahou~\cite{eliahou-imns} claimed to have a proof using graph theoretical methods that every numerical semigroup with $\operatorname m(S) \le 12$ satisfies Wilf's conjecture, though this work has yet to appear in the literature.

\section{Numerical semigroups and Wilf's conjecture}
\label{sec:wilf}

We say that a numerical semigroup is \emph{Wilf} if it satisfies Wilf's conjecture.  In this section, we introduce some additional concepts from the numerical semigroups literature, and survey some recent progress on Wilf's conjecture.  We direct the interested reader to~\cite{wilf-survey} for an exhaustive overview of the partial results obtained to date.  

Fix a numerical semigroup $S \subset \ZZ_{\ge 0}$.  A \emph{gap} of $S$ is a nonnegative integer outside of $S$, and the largest gap of $S$, denoted $\operatorname F(S)$, is known as its \emph{Frobenius number}.  In particular, we have $\operatorname c(S) = \operatorname F(S) + 1$.  We denote by $\operatorname G(S)$ the set of gaps of $S$; its cardinality $\operatorname g(S) = |\operatorname G(S)|$ is called the \emph{genus} of $S$.  A \emph{generating set} of a numerical semigroup $S$ is a subset $A \subset S$ with 
\[
S = \<A\> = \{a_1 + \dots + a_k : k \in \ZZ_{\ge 0}, a_1, \ldots, a_k \in A\},
\]
and every numerical semigroup $S$ admits a unique generating set $\mathcal A(S)$ that is minimal with respect to containment.  The elements of $\mathcal A(S)$ are the atoms of $S$ (and as such are sometimes also called the \emph{minimal generators} of $S$), and $\operatorname e(S) = |\mathcal A(S)|$.  

The \emph{Ap\'ery set} of an element $n \in S$ is the set 
\[
\Ap(S;n) = \{s \in S : s - n \notin S\}.
\]
It is well known that $\Ap(S;n)$ has precisely $n$ elements, each of which is distinct modulo $n$.  More precisely, $\Ap(S;n) = \{0, a_1, \ldots, a_{n-1}\}$, where $a_i = \min\{m \in S: m \equiv i \bmod n\}$. 

An integer $f$ is said to be a \emph{pseudo-Frobenius number} of $S$ if $f \notin S$ but $f + \mathcal A(S) \subset S$.  In particular, $\operatorname F(S)$ is a pseudo-Frobenius number of $S$. The cardinality of the set $\operatorname{PF}(S)$ of pseudo-Frobenius numbers of $S$ is the (Cohen-Macaulay) \emph{type} of $S$ and denoted $\operatorname t(S)$.  According to \cite[Theorem 20]{f-g-h}, for any numerical semigroup $S$, 
\begin{equation}\label{eq:wilf-type}
\operatorname c(S) \le (\operatorname t(S) + 1)\operatorname n(S).
\end{equation}
This implies that if $\operatorname e(S) > \operatorname t(S)$, then $S$ is Wilf.  This has the following consequences.
\begin{itemize}
\item 
If $\operatorname t(S) = 1$, then $S$ is Wilf.  Numerical semigroup with $\operatorname t(S) = 1$ are called \emph{symmetric}, and include all numerical semigroups with $\operatorname e(S) = 2$ (see \cite{f-g-h,numerical}).  

\item 
If $S$ is \emph{irreducible} (that is, if $S$ cannot be expressed as the intersection of two numerical semigroups properly containing it), then $S$ is Wilf.  Indeed, in this case, if $\operatorname F(S)$ is odd, then one can show $\operatorname t(S) = 1$, so $S$ is symmetric and thus Wilf.  On the other hand, if $S$ is irreducible and $\operatorname F(S)$ is even (we say $S$ is \emph{pseudo-symmetric} in this case), then one can show $\operatorname t(S) = 2$ and $\operatorname e(S) \ge 3$.  In either case, $S$ is Wilf by~\eqref{eq:wilf-type}.  

\item 
Any numerical semigroup with $\operatorname e(S) = 3$ has $\operatorname t(S) \le 2$, and thus is Wilf \cite[Chapter~1]{numerical}. 

\item 
If $\operatorname e(S) = \operatorname m(S)$, then $S$ is Wilf, since $\operatorname e(S) \le \operatorname m(S)$ and $\operatorname t(S) \le \operatorname m(S) - 1$ hold for every numerical semigroup (see, for instance, \cite[Chapter 1]{numerical}).  Such numerical semigroups are called \emph{maximal embedding dimension} due in part to the first inequality.	
\end{itemize}

Separately, Wilf's conjecture has been proved in numerous special cases~\cite{barucci,d-m,eliahou-mc,e-ma}, for instance, when $\operatorname c(S) \le 3\operatorname m(S)$, or $\operatorname c(S) \le 21$, or $\operatorname g(S) = (\operatorname F(S) + \operatorname t(S))/2$.  Of particular relevance to the results in this manuscript is~\cite{f-h}, wherein Fromentin and Hivert prove via computation that every numerical semigroup $S$ with $\operatorname g(S) \le 60$ is Wilf.  The key is that there are only finitely many numerical semigroups of a given genus.  The repository
\begin{center}
\url{https://github.com/hivert/NumericMonoid}
\end{center}
contains the number of numerical semigroups with each genus up to 70, though Wilf's conjecture has not been verified for the computed semigroups with genus above 60.

\section{The Kunz polyhedron and related polyhedra}
\label{sec:kunz}

In this section, we introduce the Kunz polyhedron, as well as two (new) closely related polyhedra, one of which is a pointed cone $C_m$.  The main results are Theorem~\ref{t:aperyposetfaces}, which gives a combinatorial interpretation of the faces of $C_m$, and Corollary~\ref{c:withinfaces}, which is a key ingredient to Algorithm~\ref{a:wilfmultiplicity} in verifying Wilf's conjecture for fixed multiplicity.  

Throughout the remainder of the paper, we will use basic terminology and facts from convex geometry, which we briefly summarize here. For more extensive treatments we refer the reader to Bruns and Gubeladze~\cite{BG} or Ziegler~\cite{Ziegler}.

An \emph{affine half-space} of $\RR^n$ is a subset $\{x \in \RR^n: \lambda(x) \ge \alpha\}$ for some linear form $\lambda\in(\RR^n)^*$ and some $\alpha \in\RR$.  The half-space is \emph{linear} if $\alpha = 0$, and \emph{rational} if $\alpha$ and the coefficients of $\lambda$ can be chosen in $\QQ$.  A \emph{polyhedron} is the intersection of finitely many affine half-spaces.  We denote by $P^\circ$ the topological interior of $P$.  A \emph{polytope} is a bounded polyhedron, whereas a (polyhedral) \emph{cone} is the intersection of finitely many linear half-spaces. A cone $C$ is \emph{pointed} if $x,-x\in C$ implies $x = 0$. A \emph{support(ing) hyperplane} of a polyhedron $P$ is a hyperplane $H$ such that $P$ is contained in one of the two closed half-spaces into which $\RR^n$ is decomposed by $H$.  A \emph{face} of $P$ is a subset $F = P\cap H$ where $H$ is a support hyperplane.  The polyhedron $P$ itself is considered an improper face.  The \emph{dimension} of $F$ is the dimension of its affine hull, and a face of dimension $k$ is called a $k$-face. A \emph{facet} is a face $F$ such that $\dim F=\dim P-1$, and a \emph{vertex} is a face of dimension $0$. Faces of polyhedra, polytopes and cones are themselves polyhedra, polytopes, and cones, respectively. The \emph{extreme rays} of a cone are its $1$-faces. It is important to note that every proper face of a polyhedron is the intersection of the facets in which it is contained; in particular, a polyhedron has only finitely many faces.

An affine half-space $\{x \in \RR^n : \lambda(x) \ge \alpha\}$ is \emph{rational} if the coefficients of $\lambda$ and $\alpha$ can be chosen in $\QQ$. A \emph{rational} polyhedron is the intersection of rational half-spaces. Faces of rational polyhedra are rational.

The \emph{$H$-representation} of a polyhedron $P$ is an expression of $P$ as an intersection of half-spaces.  A cone $C$ can equivalently be represented as $\{\alpha_1v_1+\dots+\alpha_mv_m: \alpha_1,\dots,\alpha_n\in\RR_+ \}$ for some $v_1,\dots,v_m\in\RR^n$, and by Minkowski's theorem, a polytope $P$ is the convex hull of its vertices; these expressions constitute the \emph{$V$-representations} of their respective polyhedra.  Any nonempty polyhedron $P$ equals the Minkowski sum of some polytope $Q$ and a cone $C$, i.e., $P = \{x + y : x \in Q, y \in C\}$.  The cone $C$ is unique, and called the \emph{recession cone} of $P$.

\begin{defn}\label{d:kunzpolyhedron}
Fix a numerical semigroup $S \subset \ZZ_{\ge 0}$, and let $m = \operatorname{m}(S)$ denote its multiplicity.  Writing $\Ap(S;m) = \{0, a_1,\ldots, a_{m-1}\}$ so that each $a_i = k_im + i$ for some positive integer $k_i$, we call $(k_1, \ldots, k_{m-1})$ the \emph{Kunz coordinates} of $S$.  It can be shown (see~\cite{kunz-coordinates}) that an integer vector $(x_1, \ldots, x_{m-1})$ are the Kunz coordinates of a numerical semigroup with multiplicity $m$ if and only if 
\begin{align*}
x_i           & \ge 1         & \text{for } & 1 \le i \le m-1,
\\
x_i + x_j     & \ge x_{i+j}   & \text{for } & 1 \le i \le j \le m-1 \text{ with } i + j < m, \text{ and}
\\
x_i + x_j + 1 & \ge x_{i+j} & \text{for } & 1 \le i \le j \le m-1 \text{ with } i + j > m,
\end{align*}
where the subscript of $x_{i+j}$ is interpreted modulo $m$.  In the remainder of the paper, whenever we write a variable $x_i$, we regard $i$ as a nonzero element of the cyclic group $\ZZ/(m)$.  

The polyhedron defined by the above inequalities is called the \emph{Kunz polyhedron} associated to $m$, which we will denote by $P_m$.  Numerical semigroups with multiplicity $m$ are in natural bijection with the integer points of $P_m$. Thus, we will sometimes identify a numerical semigroup with its Kunz coordinates, and by abuse of language, we will for instance say that $S$ lies in the interior of $P_m$ or is contained in a certain face of $P_m$.
\end{defn}

The following result is a well known fact that follows from Selmer's equalities, and implies that numerical semigroups with multiplicity $m$ and genus $g$ are in bijection with the integer points in the intersection of the polyhedron~$P_m$ and the hyperplane $x_1 + \cdots + x_{m-1} = g$.

\begin{lemma}\label{l:genusandfrob}
Fix a numerical semigroup $S$ with multiplicity $m$.  We have
\[
\operatorname{g}(S) = x_1 + \cdots + x_{m-1}  \text{ and }  \operatorname{F}(S) = \max\{mx_i + i - m : 1 \le i \le m - 1\},
\]
where $(x_1,\ldots,x_{m-1})$ denote the Kunz coordinates of $S$.  
\end{lemma}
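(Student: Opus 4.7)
The plan is to analyze the gaps of $S$ one residue class modulo $m$ at a time, using the structural description of the Apéry set.

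By definition, $\Ap(S;m) = \{0,a_1,\dots,a_{m-1}\}$, where $a_i = x_im+i$ is the smallest element of $S$ congruent to $i \pmod m$. Since $S$ is closed under addition by $m$, the set of elements of $S$ congruent to $i\pmod m$ is precisely $a_i + m\ZZ_{\ge 0}$. Consequently, the set of gaps congruent to $i\pmod m$ is the finite arithmetic progression
\[
\{i,\; i+m,\; i+2m,\; \dots,\; i+(x_i-1)m\},
\]
which has exactly $x_i$ elements (and for $i = 0$ contributes nothing, since $0 \in S$).

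\textbf{Genus.} Partitioning $\operatorname G(S)$ by residue class mod $m$ and summing the counts above yields
\[
\operatorname g(S) = |\operatorname G(S)| = \sum_{i=1}^{m-1} x_i.
\]

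\textbf{Frobenius number.} The largest gap in residue class $i$ is $i+(x_i-1)m = mx_i + i - m$. Taking the maximum over the $m-1$ nonzero residue classes gives
\[
\operatorname F(S) = \max\{mx_i + i - m : 1 \le i \le m-1\}.
\]

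Both computations are essentially bookkeeping once the key fact $a_i = x_im+i$ is in hand, so there is no serious obstacle; the only subtlety is to remember to exclude the class $i=0$ (which contains no gaps) from both the sum and the maximum, and to observe that $x_i \ge 1$ ensures each nonzero residue class contributes at least one gap, so the maximum is well defined.
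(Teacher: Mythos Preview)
Your proof is correct and follows essentially the same approach the paper indicates: the paper cites Selmer's equalities and \cite[Proposition~2.12]{numerical}, but parenthetically notes that one can argue ``more directly, by counting the number of gaps in each equivalence class modulo $m$,'' which is exactly what you carry out in full. Your version is more self-contained, but the underlying idea is the same.
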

\begin{proof}
By Selmer's equalities \cite{selmer} (or, more directly, by counting the number of gaps in each equivalence class modulo $m$), the genus of $S$ equals 
\[\frac{1}{m}\sum_{w\in \Ap(S;m)}w-\frac{m-1}2=\sum_{i=1}^{m-1}x_i,\]
and $\operatorname{F}(S)=\max\Ap(S;m)-m$ by \cite[Proposition 2.12]{numerical}.
\end{proof}

We are now ready to introduce the Kunz cone.  

\begin{defn}\label{d:kunzcone}
Fix an integer $m\ge 3$.  The \emph{relaxed Kunz polyhedron} is the set $P'_m$ of rational points $(x_1,\ldots,x_{m-1})\in\mathbb{R}^{m-1}$ satisfying
\begin{align*}
x_i + x_j     &\ge x_{i+j}   & 1 &\le i \le j \le m-1, \, i + j < m
\\
x_i + x_j + 1 &\ge x_{i+j} & 1 &\le i \le j \le m-1, \, i + j > m.
\end{align*}
The \emph{Kunz cone} is the set $C_m$ of points $(x_1,\ldots,x_{m-1})\in\mathbb{R}^{m-1}$ satisfying
\begin{align*}
x_i + x_j &\ge x_{i+j}   & 1 &\le i \le j \le m-1, \, i + j \ne m.
\end{align*}
\end{defn}

Proposition~\ref{p:relaxedvertex} presents several basic properties of $P_m'$ and $C_m$, and their precise relation to the Kunz polyhedron $P_m$.  In particular, as a consequence of Proposition~\ref{p:relaxedvertex}, $C_m$ is a pointed cone and a translation of $P_m'$.  

\begin{prop}\label{p:relaxedvertex}
For each $m\ge 3$ the following hold:
\begin{enumerate}[(a)]
\item 
the Kunz cone $C_m$ is contained in the positive orthant $\RR_+^{m-1}$;

\item 
the relaxed Kunz polyhedron $P_m'$ has unique vertex
\[
v = (-1/m,-2/m,\ldots,-(m-1)/m),
\]
and one has $x \in P_m'$ if and only if $x - v \in C_m$; and 

\item 
$C_m$ is the recession cone of $P_m$ and a translation of $P_m'$.
\end{enumerate}  
\end{prop}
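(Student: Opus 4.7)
My plan is to establish (a) first by a combinatorial iteration; once (a) is known, (b) and (c) follow from direct computation.

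For (a), fix $i \in \{1, \ldots, m-1\}$ and let $n = m/\gcd(i,m)$ denote its order in $\ZZ/(m)$. Suppose first $n \ge 3$ (equivalently $2i \ne m$). For $k = 2, \ldots, n-1$, the inequality
\[
x_{(k-1)i \bmod m} + x_i \ge x_{ki \bmod m}
\]
is a defining inequality of $C_m$, because $ki \not\equiv 0 \pmod m$ in this range forces the two indices to sum to something $\ne m$ as integers. Telescoping gives $x_{(n-1)i \bmod m} \le (n-1) x_i$, and since $(n-1)i \equiv -i \pmod m$, this reads $x_{m-i} \le (n-1) x_i$. Applying the same argument to $m-i$ (which has the same order $n$) yields $x_i \le (n-1) x_{m-i}$. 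Combining, $((n-1)^2 - 1) x_i \ge 0$, so $x_i \ge 0$ since $n \ge 3$. For the remaining case $n = 2$ (so $i = m/2$ and $m$ is even), pick any $j \ne m/2$ and add the valid $C_m$ inequalities $x_j + x_{m/2} \ge x_{(j + m/2) \bmod m}$ and $x_{(j + m/2) \bmod m} + x_{m/2} \ge x_j$ (validity of the second uses $(j + m/2) \bmod m \ne m/2$); the $x_j$ terms cancel, leaving $2 x_{m/2} \ge 0$.

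For (b), a short computation gives $v_i + v_j = v_{i+j}$ when $i+j < m$ and $v_i + v_j + 1 = v_{(i+j) \bmod m}$ when $i + j > m$, so every defining inequality of $P_m'$ holds with equality at $v$. Substituting $y = x - v$ into these inequalities then yields exactly the defining inequalities of $C_m$, establishing $x \in P_m' \iff x - v \in C_m$, i.e., $P_m' = v + C_m$. By (a), $C_m \subset \RR_+^{m-1}$ is pointed with apex $\{0\}$, so $v + C_m$ has unique vertex $v$. For (c), homogenizing the $H$-representation of $P_m$ shows its recession cone consists of vectors $y$ satisfying $y_i \ge 0$ together with $y_i + y_j \ge y_{(i+j) \bmod m}$ for $i + j \ne m$; by (a) the positivity constraints are redundant, so the recession cone coincides with $C_m$, and the translation statement is already in (b).

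The main obstacle is (a): the defining inequalities of $C_m$ only bound one coordinate by a sum of two, so individual positivity is not manifest and must be extracted by a nontrivial combination. The cycle-around-the-orbit argument --- walking from $i$ to $-i \equiv m-i$ and back through the subgroup generated by $i$, while avoiding $0$ --- is the key trick, with the fixed point $i = m/2$ handled by the separate two-line argument.
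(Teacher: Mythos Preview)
Your proof is correct and follows essentially the same strategy as the paper: both arguments for (a) exploit the cyclic subgroup generated by the index in $\ZZ/(m)$ and iterate the defining inequalities along that orbit, with the element of order $2$ treated separately. Your execution is slightly different---you telescope directly to obtain $x_{m-i}\le (n-1)x_i$ and $x_i\le (n-1)x_{m-i}$ and combine, whereas the paper argues by contradiction, assuming some $x_k<0$ and propagating negativity around the orbit until an inequality $x_k<x_k$ appears---but the underlying idea is the same. Parts (b) and (c) are handled identically in both.
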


\begin{proof}
Assume that $x_k < 0$ for some $k$.  Let $n$ be the order of $k$ in $\ZZ/(m)$.  Using the inequalities of the Kunz cone, one sees immediately that $x_j < 0$ for all $j = 1, \dots, n-1$.  If $n > 2$ we can write $1 \equiv 2 + (k-1) \bmod m$, and use the inequality $x_2 + x_{n-1}	\ge x_1$ to obtain the contradiction $x_1 < x_1$. If $n = 2$, or, equivalently, $k = m/2$, then we get the contradiction $x_1 < x_1$ since $x_{k+1} \le x_1 + x_m < x_1$ and $x_1 \le x_{k+1} + x_k$. This proves the first claim.

Next one readily checks that $v$ satsifies every defining inequality of $P_m'$ with equality. It follows immediately that $P_m'=\{v\}+C_m$ and that $v$ is the only vertex of $P_m'$.

If a nonempty polyhedron is defined by a system of inequalities, then the recession cone is defined by the associated homogeneous system. Since $C_m\subset \RR_+^{m-1}$ it is indeed the recession cone of $P_m'$ and $P_m$.
\end{proof}

\begin{remark}\label{r:relaxedvertex}
Proposition~\ref{p:relaxedvertex} also appeared in \cite[Proposition~1.1]{kunz}, though we have included the proof for the reader's convenience.  In the same manuscript, Kunz shows that the defining inequalities of the Kunz cone are irreduandant \cite[Proposition~1.2]{kunz}.  
\end{remark}

We now provide a characterization of the faces of the relaxed Kunz polyhedron that contain numerical semigroups.  

\begin{defn}\label{d:kunzposet}
A poset $P = (\ZZ/(m)\setminus\{0\}, \preceq)$ is an \emph{$m$-Kunz poset} if for distinct $i, j \in P$, we~have $i \preceq j$ implies $j - i \preceq j$.
\end{defn}

\begin{defn}\label{d:aperyposet}
Fix a numerical semigroup $S$ with multiplicity $m$, and write 
$$\Ap(S;m) = \{0, a_1, \ldots, a_{m-1}\}$$
so that $a_i \equiv i \bmod m$ for each $i$.  The \emph{Ap\'ery poset} of $S$ is a poset 
$$\mathcal P(S) = (\ZZ/(m)\setminus\{0\}, \preceq)$$ defined by $i \preceq j$ whenever $a_j - a_i \in S$.  Said another way, $\mathcal P(S)$ is the divisibility poset of~$S$ restricted to the nonzero elements of $\Ap(S;m)$ wherein each element is labeled with its equivalence class modulo $m$.  
\end{defn}

\begin{example}\label{e:mcnugget}
Let $S = \<6, 9, 20\>$.  The Hasse diagram of $\mathcal P(S)$ is depicted in Figure~\ref{f:mcnuggetapery}.  Each minimal element of $\mathcal P(S)$ represents one of the minimal generators of $S$ aside from~$6$.  Moreover, in this depiction, each cover relation corresponds to adding some minimal element of $\mathcal P(S)$ (indeed, each ``up-right'' edge corresponds to adding $2$, and each ``up-left'' edge corresponds to adding $3$).  
\end{example}

\begin{figure}[t!]
\begin{center}
\hfill
\hfill
\begin{subfigure}[t]{0.30\textwidth}
\begin{center}
\includegraphics[height=1.0in]{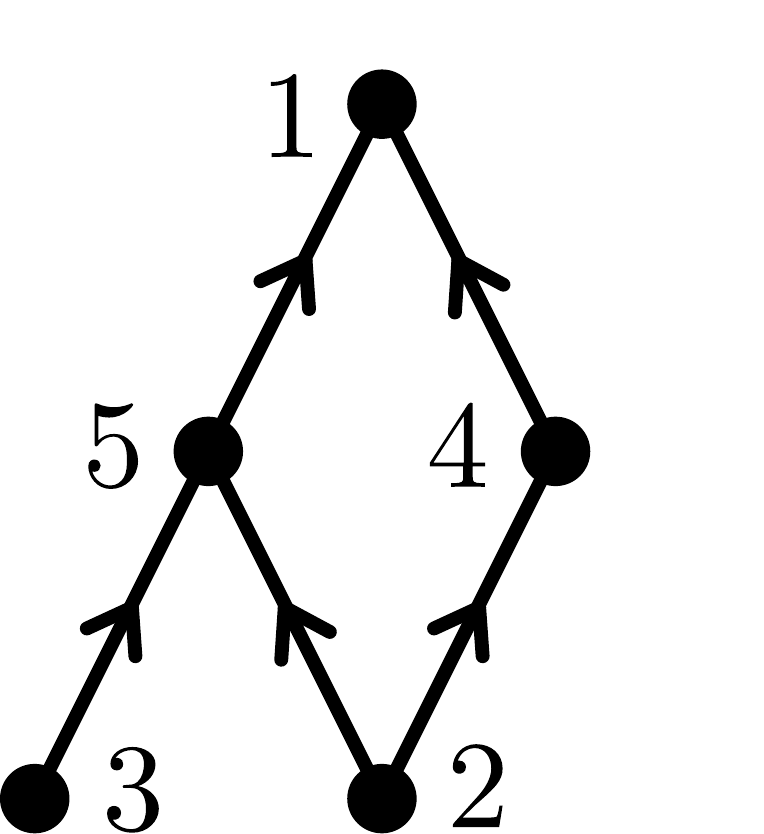}
\end{center}
\caption{}
\label{f:mcnuggetapery}
\end{subfigure}
\hfill
\begin{subfigure}[t]{0.20\textwidth}
\begin{center}
\includegraphics[height=1.0in]{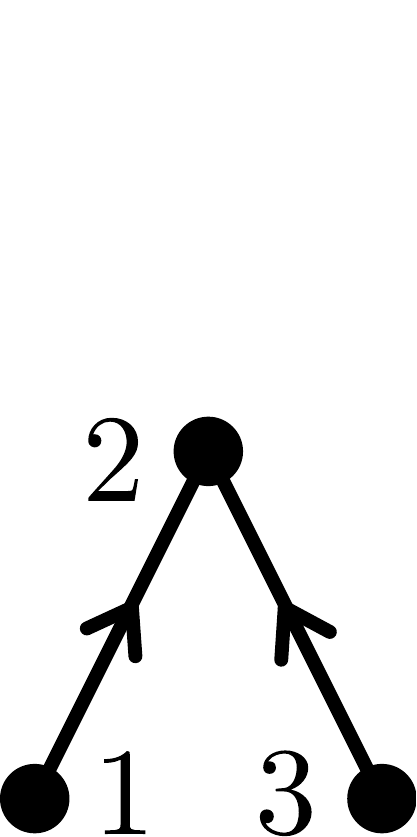}
\end{center}
\caption{}
\label{f:mkunzposetnonns}
\end{subfigure}
\hfill
\begin{subfigure}[t]{0.40\textwidth}
\begin{center}
\includegraphics[height=1.0in]{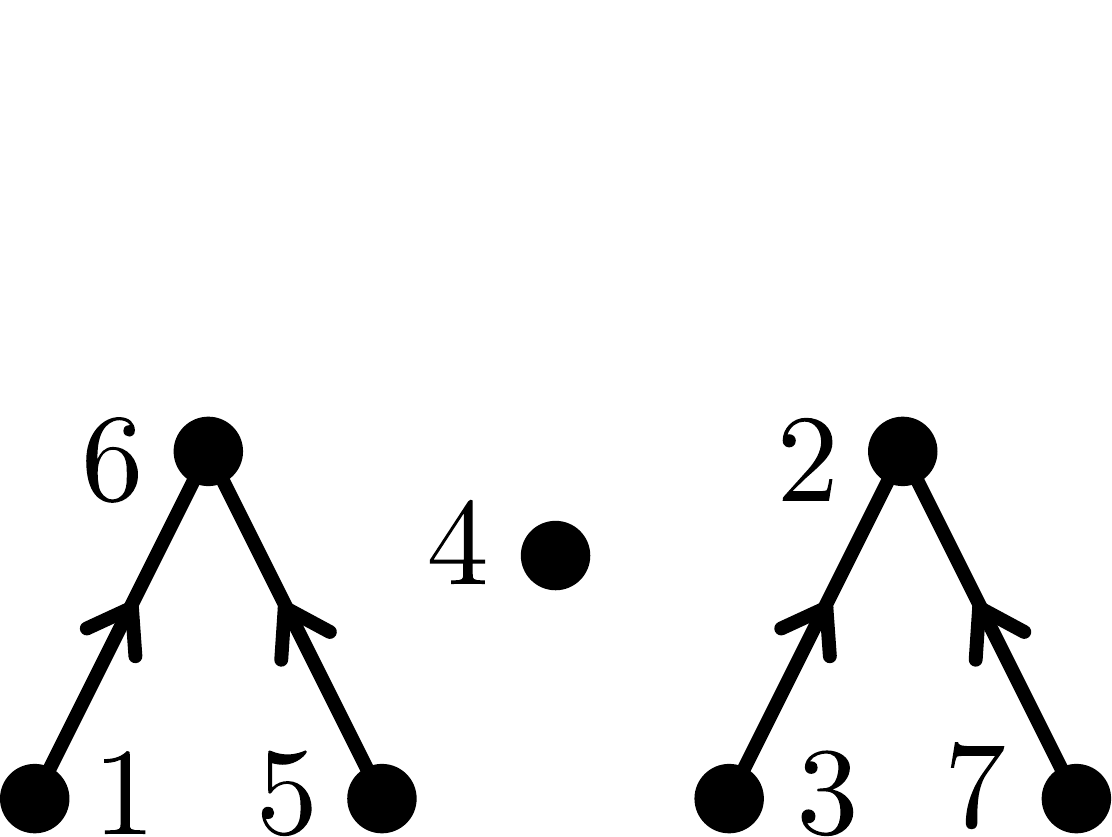}
\end{center}
\caption{}
\label{f:mkunzposetnonface}
\end{subfigure}
\end{center}
\hfill
\hfill
\caption{Sample $m$-Kunz posets from Examples~\ref{e:mcnugget} and~\ref{e:nofacebijection}.}
\end{figure}

\begin{lemma}\label{l:kunzposet}
The Ap\'ery poset of any multiplicity $m$ numerical semigroup $S$ is $m$-Kunz.  
\end{lemma}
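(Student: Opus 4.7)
The plan is to unpack both definitions and chase what they say about the actual elements of $\Ap(S;m)$, rather than working abstractly with the poset. Fix $S$ with multiplicity $m$ and write $\Ap(S;m) = \{0, a_1, \ldots, a_{m-1}\}$ with $a_i \equiv i \pmod m$. Assume $i, j \in \ZZ/(m)\setminus\{0\}$ are distinct with $i \preceq j$ in $\mathcal{P}(S)$; I need to verify $j - i \preceq j$, which by Definition~\ref{d:aperyposet} means exhibiting $a_j - a_{j-i}$ as an element of $S$. Note that since $i \ne j$ in $\ZZ/(m)$, the residue $j - i$ is nonzero, so $a_{j-i}$ is genuinely defined as a nonzero element of the Ap\'ery set.

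The first step is to translate the hypothesis: $i \preceq j$ gives $a_j - a_i \in S$, and this element is congruent to $j - i$ modulo $m$. By the characterization $a_{j-i} = \min\{s \in S : s \equiv j-i \pmod m\}$, minimality immediately yields the inequality $a_{j-i} \le a_j - a_i$, equivalently
\[
a_j - a_{j-i} \ge a_i.
\]

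The second (and final) step is to upgrade this inequality to membership in $S$. The residue class of $a_j - a_{j-i}$ modulo $m$ is $j - (j-i) = i$, so $a_j - a_{j-i} = a_i + km$ for some integer $k \ge 0$ (nonnegative by the inequality above). Since $a_i \in S$ and $m \in S$, closure of $S$ under addition gives $a_j - a_{j-i} \in S$, which is exactly the relation $j - i \preceq j$.

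I do not expect any real obstacle here: the only subtlety is ensuring $j - i$ is a legitimate index (handled by the distinctness hypothesis baked into Definition~\ref{d:kunzposet}) and remembering that once one knows an integer is $\ge a_i$ and lies in the correct residue class mod $m$, membership in $S$ is automatic because $a_i$ and $m$ are both in $S$. Everything else is just rewriting the poset relation in terms of the elements $a_\bullet$ and invoking the minimality defining the Ap\'ery set.
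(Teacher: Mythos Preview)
Your proof is correct and takes essentially the same approach as the paper: unpack the definitions and use the minimality of $a_{j-i}$ in its residue class modulo $m$. The paper's version is marginally tighter in that it actually shows $a_j - a_i \in \Ap(S;m)$ (since otherwise $a_j - m \in S$), yielding the equality $a_j - a_i = a_{j-i}$ outright, whence $a_j - a_{j-i} = a_i \in S$ with no further work; your inequality-plus-closure argument reaches the same conclusion by a slightly longer path.
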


\begin{proof}
Write $\Ap(S;m) = \{0, a_1, \ldots, a_{m-1}\}$ with $a_i \equiv i \bmod m$ for each $i$.  If $i \preceq j$ in $\mathcal P(S)$, then $a_j - a_i \in S$.  This means $a_j - a_i \in \Ap(S;m)$, as otherwise $a_j - a_i - m \in S$ and thus $a_j \notin \Ap(S;m)$.  Since $a_j - a_i \equiv j-i \bmod m$, we must have $a_j - a_i = a_{j-i}$, so we conclude $a_j - a_{j-i} = a_i \in S$ and thus $j-i \preceq  j$.
\end{proof} 

\begin{thm}\label{t:aperyposetfaces}
Two numerical semigroups $S$ and $T$ with multiplicity $m$ lie on the interior of the same face of $P_m'$ if and only if $\mathcal P(S) = \mathcal P(T)$.  
\end{thm}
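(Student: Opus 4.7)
The approach is to apply the standard polyhedral fact that two points of a polyhedron lie in the interior of the same face if and only if precisely the same defining inequalities are tight at both. Hence it suffices to show that the set of Kunz inequalities satisfied with equality by $S$ is both determined by, and determines, the Apéry poset $\mathcal P(S)$.

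First I would rewrite each defining inequality of $P_m'$ in terms of the Apéry set. Using $a_k = m x_k + k$ and splitting into the cases $i + j < m$ and $i + j > m$, a one-line calculation shows that the Kunz inequality at the pair $(i, j)$ is equivalent to $a_i + a_j \ge a_{(i+j) \bmod m}$, which holds trivially because $a_i + a_j \in S$ while $a_{(i+j) \bmod m}$ is the smallest element of $S$ in its residue class modulo $m$. The crux of the argument is the characterization of equality: I would show that $a_i + a_j = a_{(i+j) \bmod m}$ if and only if $i \preceq (i+j) \bmod m$ in $\mathcal P(S)$. The forward implication is immediate, since equality gives $a_{(i+j) \bmod m} - a_i = a_j \in S$. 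For the converse, starting only from $a_{(i+j) \bmod m} - a_i \in S$, one must force this difference to lie in $\Ap(S; m)$, so that its residue $j \bmod m$ pins it down as $a_j$. This is the step I expect to require the most care: it rests on the Apéry property $a_{(i+j) \bmod m} - m \notin S$, since otherwise adding $a_i$ back would place $a_{(i+j) \bmod m} - m$ in $S$, a contradiction.

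To finish, I would package the correspondence. Each unordered Kunz pair $\{i, j\}$ whose inequality is tight at $S$ yields the two symmetric relations $i \preceq (i+j) \bmod m$ and $j \preceq (i+j) \bmod m$ in $\mathcal P(S)$, a symmetry that is exactly the $m$-Kunz property of Lemma~\ref{l:kunzposet}. Conversely, every nontrivial relation $i \prec k$ in $\mathcal P(S)$ is recorded by the tight Kunz inequality at the pair $\{i, (k - i) \bmod m\}$. Hence the set of tight defining inequalities at $S$ and the poset $\mathcal P(S)$ mutually determine each other, and applying the polyhedral fact to $S$ and $T$ gives the theorem.
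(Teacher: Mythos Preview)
Your proposal is correct and follows essentially the same route as the paper: both reduce the claim to the polyhedral fact that interior points of a face are characterized by the set of defining inequalities they satisfy with equality, and then identify a tight Kunz inequality at $(i,j)$ with the relation $i \preceq (i+j)\bmod m$ in $\mathcal P(S)$. Your write-up is in fact more detailed than the paper's, which simply asserts this biconditional; the ``care step'' you flag (forcing $a_{(i+j)\bmod m}-a_i$ into $\Ap(S;m)$) is exactly the content of Lemma~\ref{l:kunzposet}, which the paper has already recorded separately.
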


\begin{proof}
Fix a face $F$ and a numerical semigroup $S \in F^\circ$.  Each defining inequality of $P_m$ holds with either equality or strict inequality for every point in $F^\circ$.  Moreover, $x_i + x_j = x_{i+j}$, with $i+j<m$, holds for the Kunz coordinates of $S$ if and only if $i \preceq i + j$ in $\mathcal P(S)$, and analogously $x_i + x_j = x_{i+j} - 1$, with $i+j>m$, holds for the Kunz coordinates of $S$ if and only if $i \preceq i + j$.  As such, the relations in $\mathcal P(S)$ are determined by the defining equations and strict inequalities of $F^\circ$.  
\end{proof}

The following result, which Kunz also observed in \cite[Section~2]{kunz}, relates the embedding dimensions of all numerical semigroups in the interior of the given face, and forms the crux of Algorithm~\ref{a:wilfmultiplicity}.  

\begin{cor}\label{c:withinfaces}
Fix a face $F \subset P_m'$.  
For any numerical semigroups $S, T \in F^\circ$, $\operatorname{e}(S) = \operatorname{e}(T)$ and $\operatorname{t}(S) = \operatorname{t}(T)$.  More specifically, $\operatorname{e}(S) - 1$ and $\operatorname{t}(S)$ count variables not appearing on the right and left hand sides of any defining equations of $F$, respectively.  
\end{cor}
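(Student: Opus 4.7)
The plan is to combine Theorem~\ref{t:aperyposetfaces} with the standard poset-theoretic interpretations of atoms and pseudo-Frobenius numbers via the Apéry set. Since $S, T \in F^\circ$ satisfy $\mathcal P(S) = \mathcal P(T)$ by Theorem~\ref{t:aperyposetfaces}, the equalities $\operatorname{e}(S) = \operatorname{e}(T)$ and $\operatorname{t}(S) = \operatorname{t}(T)$ will follow once I show that each of these invariants is determined by (and can be read off) $\mathcal P(S)$. In fact, I plan to prove the stronger ``more specifically'' statement directly, which subsumes the equality assertion.

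Fix $S \in F^\circ$ with Apéry set $\Ap(S;m) = \{0, a_1, \ldots, a_{m-1}\}$ and Kunz coordinates $(x_1, \ldots, x_{m-1})$. First I would argue that $\operatorname e(S) - 1$ equals the number of minimal elements of $\mathcal P(S)$: an atom of $S$ other than $m$ is precisely an element $a_k \in \Ap(S;m)$ that cannot be written as $a_i + a_{k-i}$ with $i, k-i \not\equiv 0 \pmod m$, which by the definition of $\mathcal P(S)$ is the same as saying that $k$ is minimal in $\mathcal P(S)$. Dually, I would invoke the well-known fact (see e.g.\ \cite{numerical}) that the pseudo-Frobenius numbers of $S$ are exactly the elements $a_k - m$ for $k$ maximal under $\preceq$ in $\mathcal P(S)$, so $\operatorname t(S)$ equals the number of maximal elements of $\mathcal P(S)$.

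Now I would translate these poset conditions into the face geometry. The defining equations of $F$ are precisely the inequalities of $P_m'$ that hold with equality on $F^\circ$, and by the argument in the proof of Theorem~\ref{t:aperyposetfaces}, an equation of the form $x_i + x_j = x_{i+j}$ (with the appropriate $+1$ when $i+j > m$) is a defining equation of $F$ exactly when $i \preceq i+j$ in $\mathcal P(S)$. Hence $x_k$ appears on the right-hand side of some defining equation of $F$ if and only if there exist indices $i, j \ne 0$ with $i + j \equiv k \pmod m$ and $i \preceq k$, i.e., if and only if $k$ is \emph{not} minimal in $\mathcal P(S)$. Symmetrically, $x_k$ appears on the left-hand side of some defining equation of $F$ iff there exists $j$ with $k \preceq k+j$, i.e., iff $k$ is not maximal.

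Combining these observations, the number of variables absent from every right-hand side equals the number of minimal elements of $\mathcal P(S)$, which is $\operatorname e(S) - 1$, and the number of variables absent from every left-hand side equals the number of maximal elements, which is $\operatorname t(S)$. The only subtle bookkeeping step, and thus the main thing to be careful about, is handling the wrap-around $i + j > m$ case uniformly with the $i + j < m$ case in the identification between defining equations and cover-like relations of $\mathcal P(S)$; but this is already baked into the definitions of $P_m'$ and $\mathcal P(S)$, so no genuine obstacle arises.
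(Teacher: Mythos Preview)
Your proposal is correct and follows essentially the same approach as the paper: both use the correspondence from Theorem~\ref{t:aperyposetfaces} between defining equations of $F$ and relations in $\mathcal P(S)$, then identify minimal (resp.\ maximal) elements of $\mathcal P(S)$ with atoms other than $m$ (resp.\ pseudo-Frobenius numbers shifted by $m$), citing \cite{numerical} for the latter. The paper's version is terser, but the logical content is identical.
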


\begin{proof}
Fix a face $F \subset P_m'$ and a numerical semigroup $S \in F^\circ$.  By the proof of Theorem~\ref{t:aperyposetfaces}, a~given defining inequality $x_i + x_j \ge x_{i + j}$, with $i+j<m$ holds with equality for $F$ if and only if $i \preceq i + j$ in $\mathcal P(S)$ (and similarly for $i + j > m$).  As such, $i$ is minimal (respectively maximal) in $\mathcal P(S)$ if and only if $x_i$ does not appear on the right (respectively left) hand side of any defining equalities of $F^\circ$.  

Now, \cite[Proposition 2.20]{numerical} implies the type of $S$ coincides with the cardinality of the set of maximal elements in $\mathcal{P}(S)$. It also follows easily that if $a$ is an atom of $S$ other than the multiplicity of $S$, then $a\in\Ap(S;m)$ and $a \bmod m$ is a minimal nonzero element of $\mathcal{P}(S)$.  This completes the proof.  
\end{proof}

\begin{notation}\label{n:withinfaces}
In view of Theorem~\ref{t:aperyposetfaces} and Corollary~\ref{c:withinfaces}, for each face $F \subset P_m'$ containing a numerical semigroup, we write $\operatorname e(F)$, $\operatorname t(F)$, and $\mathcal P(F)$ for embedding dimension, type, and Ap\'ery poset of any numerical semigroup in $F^\circ$.  
\end{notation}

\begin{example}\label{e:nofacebijection}
Some $m$-Kunz posets are not the Ap\'ery poset of any numerical semigroup.  For example, no numerical semigroup $S$ with multiplcity $m = 4$ can have an Ap\'ery poset with Hasse diagram in Figure~\ref{f:mkunzposetnonns}, as the relations imply its Ap\'ery set $\Ap(S;m) = \{0, a_1, a_2, a_3\}$ satisfies $2a_1 = a_2 = 2a_3$.  Despite this, $P_m'$ has a face corresponding to this $m$-Kunz poset under the correspondence in the proof of Theorem~\ref{t:aperyposetfaces} (a ray with recession cone $(1, 2, 1)$).  This ray simply does not contain any integer points.  

On the other hand, some faces of $P_m'$ contain no points with all positive entries (and thus also contain no numerical semigroups).  For instance, in the ray of $P_6'$ in the direction $(1, 0, 1, 0, 1)$, the second and fourth entries of every point are negative.  This face does not naturally correspond to an $m$-Kunz poset via the proof of Theorem~\ref{t:aperyposetfaces}.  

Lastly, some $m$-Kunz posets do not correspond to faces of $P_m'$.  For example, a face corresponding to the poset in Figure~\ref{f:mkunzposetnonface} would lie in exactly two facets of $P_8$, namely those with defining equations $x_1 + x_5 = x_6$ and $x_3 + x_7 = x_2$.  However, this implies 
\[
2x_6 + 2(x_2 + 1) = 2(x_1 + x_5) + 2(x_3 + x_7) = 2x_1 + 2x_3 + 2x_5 + 2x_7 \ge x_2 + x_6 + (x_2 + 1) + (x_6 + 1),
\]
so their intersection is also contained in the facets with defining equations $2x_1 = x_2$, $2x_3 = x_6$, $2x_5 = x_2$, and $2x_7 = x_6$.  
\end{example}

In view of Example~\ref{e:nofacebijection}, we pose the following.  

\begin{prob}\label{prob:facebijection}
Extend Theorem~\ref{t:aperyposetfaces} to characterize all faces of $P_m'$ (or,~equivalently, of~$C_m$).  
\end{prob}

\section{Verifying Wilf's conjecture for fixed multiplicity}
\label{sec:algorithm}

In this section, we provide an algorithm for determining whether Wilf's conjecture holds for every numerical semigroup with fixed multiplicity $m$.  The key to our algorithm is using Lemma~\ref{l:genusandfrob} and Corollary~\ref{c:withinfaces} to reduce Wilf's conjecture to checking for integer points in a finite list of rational polyhedra.  

Fix a face $F \subset P_m'$ and a numerical semigroup $S$ with Kunz coordinates $(x_1, \ldots, x_{m-1}) \in F^\circ$.  Using the fact that $\operatorname c(S) = \operatorname F(S) + 1$ and $\operatorname n(S) = \operatorname F(S) + 1 - \operatorname g(S)$, Wilf's conjecture can be reformulated as
\[
\operatorname F(S) + 1 \le \operatorname e(S)(\operatorname F(S) + 1 - \operatorname g(S)).
\]
Let $f \in [1, m-1]$ so that $mx_f + f$ is maximal, that is, 
\begin{equation}\label{eq:wilf1}
mx_i + i \le mx_f + f \text{ for every } i \ne f.
\end{equation}
By Corollary~\ref{c:withinfaces} every numerical semigroup in $F^\circ$ has identical embedding dimension $e$, so using Lemma~\ref{l:genusandfrob}, we may rewrite Wilf's inequality as
\begin{equation}\label{eq:wilf2}
mx_f + f - m + 1 \le e(mx_f + f - m - (x_1 + \cdots + x_{m-1}) + 1).
\end{equation}
We conclude that a face $F$ has a numerical semigroup in its interior that violates Wilf's conjecture if and only if, for some $f \le m - 1$ that is maximal in $\mathcal P(F)$, $F$ has an integer point satisfying inequalities~\eqref{eq:wilf1} and the negation of~\eqref{eq:wilf2}.  This yields Algorithm~\ref{a:wilfmultiplicity}.  
\begin{alg}\label{a:wilfmultiplicity}
Verify whether Wilf's conjecture holds for multiplicity~$m$.
\begin{algorithmic}
\Function{VerifyWilfsConjecture}{$m$}
\ForAll{$F$ face of $P_m'$}
	\State $R \gets$ defining equalities and strict inequalities of $F$
	\ForAll{$f = 1, \ldots, m-1$ with $f$ maximal in $\mathcal P(F)$}
		\State $R_f \gets$ inequalities~\eqref{eq:wilf1} and the negation of~\eqref{eq:wilf2}
		\If{region bounded by $R$ and $R_f$ contains a positive integer point}
			\State \Return False
		\EndIf
	\EndFor
\EndFor
\State \Return True
\EndFunction
\end{algorithmic}
\end{alg}

\begin{example}\label{e:wilfmultiplicity}
Let $F \subset P_6'$ denote the face containing $S = \<6, 9, 20\>$ from Example~\ref{e:mcnugget}, whose Ap\'ery poset is depicted in Figure~\ref{f:mcnuggetapery}.  Since $\operatorname e(F) = 3$, and $f = 1$ is the only maximal element, by Algorithm~\ref{a:wilfmultiplicity} we must check whether the system
\[
\begin{array}{c}
\begin{array}{c@{\qquad\quad}c@{\qquad\quad}c@{\quad}c}
\begin{array}{r@{}c@{}l}
x_1 - x_2 &{}\ge{}& 1 \\
x_1 - x_3 &{}\ge{}& 1 \\
x_1 - x_4 &{}\ge{}& 1 \\
x_1 - x_5 &{}\ge{}& 1 \\
\end{array}
&
\begin{array}{r@{}c@{}l}
2x_2 - x_4 &{}={}& 0 \\
x_2 + x_3 - x_5 &{}={}& 0 \\
-x_1 + x_2 + x_5 &{}={}& -1 \\
-x_1 + x_3 + x_4 &{}={}& -1 \\
\end{array}
&
\begin{array}{r@{}c@{}l}
-x_2 + 2x_4 &{}\ge{}& 0 \\
-x_4 + 2x_5 &{}\ge{}& 0 \\
-x_2 + x_3 + x_5 &{}\ge{}& 0 \\
-x_3 + x_4 + x_5 &{}\ge{}& 0 \\
\end{array}
&
\begin{array}{r@{}c@{}l}
2x_1 - x_2 &{}\ge{}& 1 \\
x_1 + x_2 - x_3 &{}\ge{}& 1 \\
x_1 + x_3 - x_4 &{}\ge{}& 1 \\
x_1 + x_4 - x_5 &{}\ge{}& 1 \\
\end{array}
\end{array}
\\\\[-0.6em]
-11x_1 + 3x_2 + 3x_3 + 3x_4 + 3x_5 \ge -6
\end{array}
\]
has any positive integer solutions.  More specifically, the inqualities in the first column come from~\eqref{eq:wilf1}, the equalities in the second column are the defining hyperplanes of $F$, the remaining two columns are the remaining inequalities in Definition~\ref{d:kunzpolyhedron}, and the final inequality is the negation of~\eqref{eq:wilf2}.  Some of the above inequalities have also been simplified using the fact that each $x_i$ is an integer.  Since $f = 1$ is the unique maximal element of $\mathcal P(F)$, the~infeasibility of this system implies $F$ contains no non-Wilf numerical semigroups.
\end{example}

\subsection*{Implementation}

The following refinements in Algorithm~\ref{a:wilfmultiplicity} result in significant reductions in runtime and memory. 

\begin{itemize}
\item 
By~\eqref{eq:wilf-type}, every numerical semigroup $S$ satisfies the inequality
\[
\operatorname{F}(S) + 1 \le (\operatorname{t}(S) + 1)(\operatorname{F}(S) - \operatorname{g}(S) + 1),
\]
so if $\operatorname{e}(S) > \operatorname{t}(S)$ then $S$ is Wilf.  As such, we can use Corollary~\ref{c:withinfaces} to eliminate certain faces of $P_m'$, namely those satisfying $\operatorname{e}(F) < \operatorname{t}(F) + 1$.

\item 
It is also known that if $S$ has ``high embedding dimension'', that is, if $2\operatorname e(S) \ge \operatorname m(S)$, then $S$ is Wilf, as proved by Sammartano \cite{samm}.  As such, any faces $F$ satisfying the $2\operatorname e(F) \ge m$ can be eliminated.  Additionally, for any two faces $F, F' \subset P_m'$ with $F \subset F'$, we have $\operatorname e(F) \le \operatorname e(F')$, so unlike the above item, once a face is encountered satisfying this inequality, every face containing it can be safely skipped.  

\item 
Prior to checking for integer points in a region $R$, we first check whether or not $R$ is feasibile (that is, whether $R$ is a nonempty subset of $\QQ^{m-1}$).  As it turns out, each region checked in Algorithm~\ref{a:wilfmultiplicity} for $m \le 18$ is infeasible. 

\end{itemize}
We refer to any face that cannot be skipped based on the considerations in the first two bullet points as a \emph{bad face}.

The above simplifications allowed Algorithm~\ref{a:wilfmultiplicity} to complete for $m \le 18$ using the software package \texttt{Normaliz} \cite{Normaliz} to (i) compute the face lattice of $P_m'$ (using the method developed in Section~\ref{sec:facelattice}) and (ii) subsequently enumerate the integer points in each region.  In addition to the reductions above, a custom build of \texttt{Normaliz} uses the automorphism group of the Kunz cone to simplify the face lattice computation, as well as automatically checking the feasibility of bad faces.  Our custom build can be downloaded from the following page.  
\begin{center}
\url{https://github.com/Normaliz/Normaliz/tree/wilf}
\end{center}
The repository contains input files for $11 \le m \le 19$.  See the file \texttt{ReadmeWilf} for more information on its usage.  

The output of our implementation of Algorithm~\ref{a:wilfmultiplicity} yields Theorem~\ref{t:m18}.  

\begin{thm}\label{t:m18}
For each $m \le 18$, every region tested in Algorithm~\ref{a:wilfmultiplicity} is empty.  In~particular, every numerical semigroup $S$ with $\operatorname m(S) \le 18$ is Wilf.  
\end{thm}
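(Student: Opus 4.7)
The plan is to execute Algorithm~\ref{a:wilfmultiplicity} for each $m \in \{3, 4, \ldots, 18\}$ and certify that every tested region is empty; this directly yields the theorem. The cases $m \le 2$ are immediate since the only such numerical semigroups are $\ZZ_{\ge 0}$ and those of the form $\<2, 2k+1\>$, which are symmetric and hence Wilf by~\eqref{eq:wilf-type}.

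First, I would compute the face lattice of the Kunz cone $C_m$, which by Proposition~\ref{p:relaxedvertex} is equivalent to that of $P_m'$, using the algorithm developed in Section~\ref{sec:facelattice} as implemented in the custom \texttt{Normaliz} build. A key ingredient is that $C_m$ carries a natural action of the group $(\ZZ/m\ZZ)^\times$ sending $x_i \mapsto x_{ki}$, which permutes the defining inequalities; it therefore suffices to enumerate faces up to this automorphism group, yielding substantial savings when $m$ is near $18$. For each orbit representative $F$, I would record $\operatorname{e}(F)$, $\operatorname{t}(F)$, and the set of maximal elements of $\mathcal P(F)$ via Corollary~\ref{c:withinfaces}.

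Next, I would prune the list of faces using the two reductions discussed after Algorithm~\ref{a:wilfmultiplicity}: discard any face with $\operatorname{e}(F) \ge \operatorname{t}(F) + 1$, which is Wilf by~\eqref{eq:wilf-type}, and discard any face with $2\operatorname{e}(F) \ge m$, which is Wilf by Sammartano's theorem~\cite{samm}. Since embedding dimension is monotone along face inclusions, the second condition also eliminates every face containing such a face, pruning entire subtrees of the face lattice without materializing them. For each remaining bad face $F$ and each maximal $f \in \mathcal P(F)$, I would assemble the system consisting of the defining equations and strict inequalities of $F$, the inequalities~\eqref{eq:wilf1}, and the negation of~\eqref{eq:wilf2}, and test feasibility over $\QQ$ by linear programming.

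The entire verification then reduces to certifying that every such linear system is infeasible; the stronger absence of positive integer solutions follows a fortiori, so no genuine integer-point enumeration is required. The main obstacle is purely computational: the number of (orbits of) faces of $C_m$ grows very rapidly with $m$, and even after quotienting by the automorphism group the face lattice for $m = 18$ is enormous. Making the computation tractable requires the interleaved approach in our custom \texttt{Normaliz} build, where face enumeration, feasibility checking of bad faces, and pruning by $\operatorname{e}(F)$ and $\operatorname{t}(F)$ are performed simultaneously so that whole branches of the lattice can be discarded without ever being written to memory. Running this pipeline over $m \le 18$ and observing that every bad-face linear program is infeasible completes the proof.
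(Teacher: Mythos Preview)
Your proposal is correct and follows essentially the same computational approach as the paper: enumerate the faces of $C_m$ up to the $(\ZZ/m\ZZ)^*$-action via the algorithm of Section~\ref{sec:facelattice}, discard faces handled by~\eqref{eq:wilf-type} or by Sammartano's bound, and for each remaining bad face and each maximal $f$ verify that the associated linear system is infeasible over~$\QQ$. One small inaccuracy: the monotonicity $\operatorname e(F) \le \operatorname e(F')$ for $F \subset F'$ means the Sammartano condition propagates \emph{upward} in the face lattice, so in the top-down enumeration actually used it does not let you avoid materializing subfaces; it only lets you skip the feasibility test on the upper portion of the lattice, which is how the paper uses it.
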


As noted above, each region tested in Algorithm~\ref{a:wilfmultiplicity} for $m \le 87$ is in fact empty.  With this in mind, we state the following conjecture, which implies Wilf's conjecture.  

\begin{conj}\label{conj:emptyregions}
For each $m \ge 3$, every region considered in Algorithm~\ref{a:wilfmultiplicity} is empty.  
\end{conj}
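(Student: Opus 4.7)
Since all empirical evidence through $m \le 87$ shows that every region considered in Algorithm~\ref{a:wilfmultiplicity} is already empty over $\RR$, my plan is to attempt the stronger statement of rational infeasibility for every bad face and every maximal $f$. By Farkas' lemma, this amounts to exhibiting, for each bad face $F \subset P_m'$ and each maximal $f \in \mathcal P(F)$, a nonnegative combination of the defining (in)equalities of $F$, the dominance inequalities~\eqref{eq:wilf1}, and the negated Wilf inequality~\eqref{eq:wilf2} whose sum collapses to a contradiction of the form $0 \ge \varepsilon$ with $\varepsilon > 0$. The goal is to construct such a certificate uniformly in $m$, parameterized only by the combinatorial data of $\mathcal P(F)$ and the choice of~$f$.

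The most promising source of such certificates is the combinatorial game described in Section~\ref{sec:game}: each legal move should correspond to combining one more Kunz relation into a running Farkas combination, so that a winning play produces the desired infeasibility witness. Concretely, I would build a correspondence in which the factor $\operatorname{e}(F)$ appearing on the right-hand side of the negated Wilf inequality is balanced by a bouquet of $\operatorname{e}(F) - 1$ chains through $\mathcal P(F)$ starting from the minimal elements, together with a chain descending from~$f$; by Lemma~\ref{l:genusandfrob} and Corollary~\ref{c:withinfaces}, the total weighted length of these chains under the Kunz inequalities should reproduce $g = x_1 + \cdots + x_{m-1}$, up to a strictly positive slack supplied by~\eqref{eq:wilf1}.

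The principal obstacle is that Ap\'ery posets exhibit wildly varying combinatorial structure as $m$ grows, and naive chain decompositions can fail to cover the poset, or can overload individual Kunz inequalities with inadmissible (e.g.\ negative) multipliers. Handling pseudo-symmetric and other near-irreducible posets, as well as posets with few maximal elements close to $f$, will likely require extra bookkeeping, and possibly a separate case analysis for a small family of exceptional poset shapes. A secondary difficulty is that Algorithm~\ref{a:wilfmultiplicity} loops over every maximal $f$, so the certificate must succeed for each choice simultaneously rather than only the most favorable one; this suggests that its construction must be driven by local poset data near~$f$ rather than by any single global chain decomposition of $\mathcal P(F)$, and it is not at all clear whether a single combinatorial invariant of $\mathcal P(F)$ suffices to encode all the needed multipliers.
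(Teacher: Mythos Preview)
The statement you are trying to prove is a \emph{conjecture}: the paper explicitly labels it as such and does not prove it.  There is no ``paper's own proof'' to compare against.  Conjecture~\ref{conj:emptyregions} is stated precisely because the authors cannot establish it; indeed, since it implies Wilf's conjecture, a proof would resolve a forty-year-old open problem.

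Your proposal is not a proof but a research plan, and you yourself flag the obstacles (``wildly varying combinatorial structure'', ``naive chain decompositions can fail'', ``it is not at all clear whether a single combinatorial invariant \ldots\ suffices'').  Those obstacles are real and unresolved.  What you have written is, in effect, a Farkas-lemma reformulation of the Wilf game of Section~\ref{sec:game}: a winning sequence of moves in Definition~\ref{d:wilfgame} is exactly a nonnegative combination of the Kunz equalities, the dominance inequalities~\eqref{eq:wilf1}, and the negated~\eqref{eq:wilf2} that collapses to a contradiction, and Theorem~\ref{t:wilfgame} is precisely the statement that a winning play yields infeasibility of the region.  So your ``most promising source of such certificates'' is already developed in the paper, and the paper stops where you do --- at the point where one would need to show the game is \emph{always} winnable.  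Corollary~\ref{c:wilfgame} handles only the symmetric and maximal-embedding-dimension cases; no uniform construction for arbitrary $\mathcal P(F)$ and arbitrary maximal $f$ is known.

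In short: there is no gap to diagnose in your argument because there is no argument yet, only a restatement of the problem in dual (Farkas) language together with the same partial machinery the paper already provides.  A genuine proof would have to supply the missing combinatorial construction, and that is the open content of the conjecture.
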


As a consequence of Theorem~\ref{t:m18}, we get the following result for numerical semigroups with high embedding dimension. 

\begin{cor}\label{c:delgado}
If $S$ is a numerical semigroup with $\operatorname m(S) - \operatorname e(S) \le 9$, then $S$ is Wilf.
\end{cor}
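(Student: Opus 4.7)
The plan is to combine Theorem~\ref{t:m18} with Sammartano's high embedding dimension result (the second bullet in the Implementation subsection) via a simple case split on the multiplicity $m = \operatorname{m}(S)$. Writing $e = \operatorname{e}(S)$, the hypothesis is $m - e \le 9$, i.e., $e \ge m - 9$.

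First I would handle the small case $m \le 18$: here Theorem~\ref{t:m18} applies directly and gives that $S$ is Wilf, independent of $e$. This dispenses with the bulk of the casework, since most values of $m$ where $e$ could be much smaller than $m$ are already covered.

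In the remaining case $m \ge 19$, I would use the bound $e \ge m - 9$ to estimate
\[
2e \ge 2m - 18 \ge m,
\]
where the last inequality uses $m \ge 18$. Thus $2\operatorname{e}(S) \ge \operatorname{m}(S)$, so Sammartano's theorem (cited in the Implementation subsection) ensures that $S$ is Wilf.

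There is no real obstacle here: the only thing to verify is that the numerical thresholds line up, namely that $2(m-9) \ge m$ exactly at $m \ge 18$, which matches the cutoff provided by Theorem~\ref{t:m18}. The constant $9$ in the statement is therefore tight with respect to this particular combination of inputs; any improvement would require either pushing Theorem~\ref{t:m18} to larger multiplicities or strengthening Sammartano's inequality.
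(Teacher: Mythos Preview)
Your proposal is correct and matches the paper's own proof essentially verbatim: the same case split at $m\le 18$ versus $m\ge 19$, invoking Theorem~\ref{t:m18} in the first case and Sammartano's bound $2\operatorname e(S)\ge \operatorname m(S)$ in the second.
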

\begin{proof}
We are done by Theorem~\ref{t:m18} if $\operatorname m(S) \le 18$.  If $\operatorname m(S) \ge 19$, then $\operatorname{m}(S)-9\ge \operatorname{m}(S)/2$. Hence, by hypothesis, $\operatorname{e}(S)\ge\operatorname{m}(S)-9\ge \operatorname{m}(S)/2$, and thus $S$ is Wilf in light of \cite{samm}.
\end{proof}

If we copy the same argument used by Delgado in \cite[Remark 3.20]{wilf-survey}, together with the claims given in \cite{eliahou-imns}, then we can go a bit further, though we still do not have access to proofs of the results in \cite{eliahou-imns}.

\begin{cor}\label{c:delgado-2}
	If $S$ is a numerical semigroup with $\operatorname m(S) - \operatorname e(S) \le 12$, then $S$ is Wilf.
\end{cor}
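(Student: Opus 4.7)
The plan is to mirror the proof of Corollary~\ref{c:delgado} verbatim, widening the numerical threshold from $9$ to $12$ and inserting one extra ingredient from \cite{eliahou-imns} to cover the gap that Sammartano's bound no longer bridges.  First, I would dispose of the range $\operatorname m(S) \le 18$ by a direct appeal to Theorem~\ref{t:m18}.  For the complementary range $\operatorname m(S) \ge 19$, the hypothesis $\operatorname m(S) - \operatorname e(S) \le 12$ immediately rearranges to $\operatorname e(S) \ge \operatorname m(S) - 12$.

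The next step is where this corollary diverges from its predecessor.  In Corollary~\ref{c:delgado}, the inequality $\operatorname e(S) \ge \operatorname m(S) - 9$ combines with $\operatorname m(S) \ge 19$ to yield $2\operatorname e(S) \ge \operatorname m(S)$, placing $S$ in the scope of Sammartano's theorem from \cite{samm}.  For the bound considered here, Sammartano's inequality is only satisfied once $\operatorname m(S) \ge 24$, leaving a residual window $19 \le \operatorname m(S) \le 23$ that cannot be resolved by \cite{samm} alone.  To close this window I would invoke the strengthened embedding-dimension criterion announced in \cite{eliahou-imns}---the same ingredient used by Delgado in \cite[Remark~3.20]{wilf-survey} to push his bound from $9$ to $12$---and verify that $\operatorname e(S) \ge \operatorname m(S) - 12$ triggers the hypothesis of that criterion throughout the small window $19 \le \operatorname m(S) \le 23$.

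The main obstacle here is not really mathematical but bibliographic: the argument is structurally identical to that of Corollary~\ref{c:delgado}, and its only novel content is the quoted result from \cite{eliahou-imns} whose proof is not yet publicly available.  This is precisely why the statement is hedged with the caveat about not having access to proofs.
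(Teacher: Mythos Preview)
Your proposal is correct and takes essentially the same approach as the paper. The paper streamlines your case split by applying Eliahou's criterion $3\operatorname e(S) \ge \operatorname m(S)$ from \cite{eliahou-imns} uniformly for all $\operatorname m(S) \ge 19$: since $3\operatorname e(S) \ge 3(\operatorname m(S) - 12) = 3\operatorname m(S) - 36 \ge \operatorname m(S)$ whenever $\operatorname m(S) \ge 18$, there is no need to invoke \cite{samm} separately for the range $\operatorname m(S) \ge 24$.
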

\begin{proof}
If $\operatorname m(S) \le 18$, then we are done by Theorem~\ref{t:m18}.  If $\operatorname m(S) \ge 19$, then
\[
3\operatorname e(S) \ge 3(\operatorname m(S) - 12) = 3\operatorname m(S) - 36 \ge \operatorname m(S),
\]
and we are done by \cite{eliahou-imns}.  This completes the proof.  
\end{proof}

Note that both results, as mentioned in \cite{wilf-survey}, improve \cite[Thoerem 4.9]{md}.

\begin{remark}\label{r:wilfequality}
The following are known about numerical semigroups $S$ in which Wilf's inequality $\operatorname{c}(S) \le \operatorname{e}(S)\operatorname{n}(S)$ holds with equality:
\begin{enumerate}[(1)]
\item if $\operatorname{e}(S) = 2$, then equality holds; and
\item if $\operatorname{e}(S) = \operatorname{m}(S)$, then equality holds if and only if $x_1 = \cdots = x_{m-1}$.
\end{enumerate}
See \cite[Section 2.3]{wilf-survey}.  Using the computation of the face lattice discussed in Section~\ref{sec:facelattice}, we have verified that no additional cases of equality occur for $\operatorname{m}(S)\le 15$.
\end{remark}

\section{The computation of the face lattice}
\label{sec:facelattice}

In this section, we present an algorithm for computing the face lattice of a polyhedron (Algorithm~\ref{a:facelattice}) up to orbits under the action of a group of automorphisms, currently implemented in the version of \texttt{Normaliz}~\cite{Normaliz} that was dveloped for the application to the Wilf conjecture.  (We include a remark on the automorphism free version at the end of this section.) As many other computations, it is done on the cone over the polyhedron, and therefore it is enough to consider cones $C \subset \RR^d$ in the face lattice algorithm.  Replacing $\RR^d$ with the subspace $\RR C$, we can assume $C$ is full-dimensional in~$\RR^d$, and since the face lattice remains unchanged modulo the maximal linear subspace of $C$, we can further assume $C$ is pointed.  \texttt{Normaliz} handles these steps in preparatory coordinate transformations.  

\begin{remark}\label{r:automorphism}
If a finite group of automorphisms of $C$ is given, one can often speed up the algorithm significantly by computing only the orbits of the face lattice $\cF(C)$.  Moreover, storing only one face per orbit saves a considerable amount of memory.  If all faces are needed, the orbits can be easily expanded at the end.  In our computation of the face lattices of Kunz cones, this is a crucial observation.  

Suppose $C = C_m$ is the Kunz cone from Definition~\ref{d:kunzcone}.  The set of inequalities 
\[
x_i + x_{j-i} \ge x_j \qquad \text{for} \qquad i,j \in \ZZ/(m) \setminus \{0\}
\]
defining $C$ is stable under the multiplicative action of the group $(\ZZ/(m))^*$ of units modulo $m$ on the coordinates of the ambient space. Therefore, the cone and its set of integer points are stable under the action of $(\ZZ/(m))^*$, so this group action permutes the faces of $C$.  Fortunately, the subset of ``bad'' faces determined by Corollary~\ref{c:withinfaces} (see the discussion in Section~\ref{sec:algorithm}) is stable as well. Therefore it is enough to compute the orbits of the $(\ZZ/(m))^*$-action and select the ``bad'' orbits. However, the action of $(\ZZ/(m))^*$ does not carry over to the Kunz polyhedron, so the orbits must be expanded before testing for the existence of integer points in the critical area determined by Algorithm~\ref{a:wilfmultiplicity}.
\end{remark}

\begin{remark}\label{r:datastructure}
In view of the potentially large size of the computation, the choice of data structure is crucial. A facet $H$ of $C$ is given by a linear form defining the hyperplane that cuts $H$ out from $C$.  Faces $F$ have two natural representations:\ (i)~the set $\EE(F)$ of extreme rays passing through $F$, and (ii)~the set $\HH(F)$ of facets containing $F$.  Each uniquely defines $F$.  For faster computation, it is desirable to store both representations of $F$.  Since the number of facets of $F$ is moderate (at least for the Kunz cone), while the number of extreme rays of~$C$ reaches formidable values (see Table~\ref{tb:combinatorial}), we choose representation by $\HH(F)$, recomputing $\EE(F)$ whenever it is needed.  Storing $\EE(F)$ is forbidding --- already for $m = 16$, more than $1$~TB of RAM would be needed. Both representations are realized as bit vectors.  Fortunately, the computation of $\EE(F)$ for a face $F$ from a representation of $F$ as an intersection of facets takes relatively little time; see Remark~\ref{r:facelatticealg}(c).
\end{remark}

We say that a face $F$ of a cone $C$ is \emph{cosimplicial} if it is contained in exactly $c$ facets, where $c = \codim F = \dim C - \dim F$ is the codimension of $F$.  To motivate this terminology, consider the dual cone 
\[
C^* = \{\lambda \in (\RR^d)^* : \lambda(x) \ge 0 \text{ for all } x \in C\}
\]
of $C$ and dual face
\[
F_C^* = \{\lambda \in C^* : \lambda(x) = 0 \text{ for all } x \in F\}
\]
of $F$ (see \cite[Section~1B]{BG} for a discussion of duality).  One has $\codim F = \dim F_C^*$, and the linear forms defining the facets of~$C$ containing $F$ generate the extremal rays of $F_C^*$.  As such, $F$ is cosimplicial if and only if $F_C^*$ is simplicial.  It follows that any face $G$ containing a cosimplicial face $F$ is cosimplicial as well. One can say that cosimplicial faces are ``well-behaved'' in the face lattice computation, as is illuminated by the following proposition. 

As we assume that $C$ is full-dimensional, the linear forms defining the facets of $C$ are uniquely determined up to scaling by positive factors. As such, we can consider the facets as elements of the dual space $(\RR^d)^*$ so long as scaling factors can be neglected.

\begin{prop}\label{p:cosimp}
A face $F$ of $C$ is cosimplicial if and only if it is not the intersection of less than $c=\codim F$ facets.
\end{prop}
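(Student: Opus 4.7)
The plan is to work on the dual side.  Recall from the preceding discussion that the dual face $F_C^* \subset C^*$ has dimension $c$, with extreme rays exactly the linear forms $\lambda_1, \ldots, \lambda_N$ of the facets $H_1, \ldots, H_N$ of $C$ that contain $F$, and that intersection of faces on the $C$-side corresponds to join of faces on the $C^*$-side.  Consequently, an identity $F = H_{j_1} \cap \cdots \cap H_{j_k} \cap C$ is equivalent to asserting that $F_C^*$ is the \emph{smallest} face of $C^*$ containing the rays $\lambda_{j_1}, \ldots, \lambda_{j_k}$.  Under this dictionary the proposition becomes:  $F_C^*$ is simplicial if and only if no collection of fewer than $c$ of its extreme rays has $F_C^*$ as its smallest enclosing face.

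The ``only if'' direction is immediate.  If $F$ is cosimplicial, $F_C^*$ is a simplicial $c$-dimensional cone with $c$ linearly independent extreme rays.  Any $k < c$ of these rays remain linearly independent and positively span a proper $k$-dimensional face of $F_C^*$, whose dual face in $C$ has dimension $d - k > \dim F$ and hence strictly contains $F$.  No intersection of $k < c$ facets can therefore equal $F$.

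The ``if'' direction is the main obstacle.  Assume $F_C^*$ has $N > c$ extreme rays, and cut $F_C^*$ by a transversal hyperplane to obtain a non-simplex polytope $G$ of dimension $c-1$ whose vertices correspond bijectively to the extreme rays of $F_C^*$.  The duality reduces the task to the combinatorial lemma:  \emph{every non-simplex $n$-polytope has at most $n$ vertices contained in no common facet}, which on application to $G$ delivers the required $\le c-1 < c$ extreme rays.  I would prove this lemma by induction on $n$, the base case $n = 2$ being immediate (two non-adjacent vertices of a polygon with at least $4$ vertices).  For the inductive step, if $G$ has a non-simplex facet $F'$, the induction hypothesis applied to $F'$ gives $\le n-1$ vertices of $F'$ lying in no common facet of $F'$; adjoining any vertex $w$ of $G$ outside $F'$, any face of $G$ containing the resulting $\le n$ vertices must (by the inductive conclusion inside $F'$) first contain all of $F'$ and then strictly exceed it, leaving only $G$ itself.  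If instead every facet of $G$ is a simplex, each facet has exactly $n$ vertices and is determined by its vertex set; a vertex-link argument (were every $n$-subset of vertices a facet, each vertex would have degree exactly $n$, contradicting the existence of a non-adjacent pair whenever $N > n+1$) produces an $n$-subset that is not the vertex set of any facet, and such a subset is contained in no proper face of $G$ since proper faces of a simplicial polytope have at most $n$ vertices.  Combining the lemma with the duality dictionary completes the proof.
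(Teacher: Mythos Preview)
Your dualization and the ``only if'' direction coincide with the paper's approach and are correct.  The gap is in the simplicial branch of your inductive lemma.  The parenthetical justification---``were every $n$-subset of vertices a facet, each vertex would have degree exactly $n$, contradicting the existence of a non-adjacent pair whenever $N>n+1$''---is wrong on both counts.  If every $n$-subset were a facet then every $2$-subset would be an edge, so each vertex would have degree $N-1$, not $n$; and the appeal to a non-adjacent pair fails outright for neighborly polytopes (e.g.\ cyclic polytopes in even dimension $\ge 4$), where any two vertices are adjacent regardless of how large $N$ is.  So as written the simplicial case is unproven.

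The statement you need (not every $n$-subset of vertices of a simplicial $n$-polytope with $N>n+1$ vertices is a facet) is true, and one clean fix is a ridge count: if every $n$-subset were a facet, then every $(n-1)$-subset would be a ridge, giving $\binom{N}{n-1}$ ridges; double-counting via ``each facet has $n$ ridges, each ridge lies in exactly two facets'' gives $\tfrac{n}{2}\binom{N}{n}$ ridges; equating forces $N=n+1$.  With this repair your induction goes through.

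The paper sidesteps the whole case analysis.  It triangulates the dual cone $D=F_C^*$ using only its extreme rays; since $D$ is not simplicial the triangulation has at least two top-dimensional simplices, and two adjacent ones share a $(c-1)$-dimensional face $G$.  Because $G$ lies in two maximal simplices it is interior to the triangulation, hence not contained in $\partial D$, so no proper face of $D$ contains the $c-1$ extreme rays spanning $G$.  This one-line triangulation argument replaces your inductive lemma entirely and avoids the simplicial/non-simplicial split.
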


\begin{proof}	
By duality, the proposition is equivalent to the assertion that a cone $D$ of dimension $c$ is simplicial if and only if every set $S \subset \EE(D)$ of cardinality less than $c$ is contained in a proper face of $D$.  

If $D$ is simplicial, the latter is obviously true, and only the converse needs an argument. Suppose that $D$ is nonsimplicial.  We triangulate $D$ using only its extreme rays as rays in the triangulation.  Since $D$ is not simplicial, there are at least two dimension $c$ simplices in the triangulation whose intersection is a facet $G$ of both of them.  This means $D$ is its only face containing the $c - 1$ extreme rays of $G$.
\end{proof}

We now outline the contents of Algorithm~\ref{a:facelattice}.  The computation of the face lattice is based on some preparatory steps, starting from the definition of a cone $C$ by its facets.
\begin{enumerate}[(1)]
\item
Compute $\EE(C)$ by the existing vertex enumeration algorithm.

\item
For each facet $H$ of $C$, compute $\EE(H)$.

\item
Compute the set $\cS$ of cosimplicial extreme rays. 

\item
For each $x\in(\ZZ/(m))^*$, compute the permutation of $\HH(C)$ induced by $x$.

\end{enumerate}
The last step allows us to compute the orbit of an arbitrary face by applying the permutations to the facets in $\HH(F)$ (currently, this is only implemented in the specialized version of \texttt{Normaliz} for Kunz cones). The set $\cS$ allows us to recognize some cosimplicial faces~$F$ by checking whether $\EE(F) \cap \cS\neq \emptyset$.

The representation of a non-cosimplicial face as an intersection of facets is never unique (indeed, this follows immediately from Proposition~\ref{p:cosimp}).  For checking whether a given face (or orbit) has already been found, we use four ordered sets of faces:
\begin{enumerate}[(1)]
\item
the set $\cF$ of faces, each computed in the second to last round of the \textbf{while} loop or earlier;

\item
the set $\cW$ of faces found in the previous round and to be processed in the current round;

\item
the set $\cN$ of faces produced by the current round; and

\item
the set $\cE$ of intersections $G = F \cap H$ for a fixed face $F$ and $H \in \HH(C)$.

\end{enumerate}
A face $F$ in $\cF$, $\cW$ or $\cN$ is represented by $\HH(F)$, whereas each face $G$ in the short list~$\cE$ is represented by $\EE(G)$.  Lists of bit vectors are ordered lexicographically (based on fixed (but arbitrary) orderings of $\HH(C)$ and $\EE(C)$) so a list of size $n$ can be searched in $O(\log n)$ steps.  

\begin{alg}\label{a:facelattice}
Compute the orbits of the face lattice of a cone $C$.\label{algofacelattice}
\begin{algorithmic}	
\Function{FaceLattice}{$C$}
\State $\cF\gets\emptyset$, $\cW\gets \{C\}$, $c\gets 0$, $\cN\gets\emptyset$
\While{$\cW\neq\emptyset$}
\State $c\gets c+1$
\ForAll{$F\in\cW$}
\State $\EE(F)=\bigcap_{H\in\HH(F)}\EE(H)$
\State $\cE \gets \emptyset$
\ForAll{$H\in\HH(C)$ with $H\notin \HH(F)$}
\State $\cE	\gets \cE\cup \{ F\cap H\}$
\EndFor	
\ForAll{$G\in\operatorname{Max}_\subset\cE$} 			
\State compute $\HH(G)$
\State $\cN\gets \cN\cup\{\min \orbit(G)\}$
\EndFor
\EndFor
\State $\cF\gets \cF\cup \cW$, $\cW \gets \cN$, $\cN\gets \emptyset$
\EndWhile
\State \Return $\cF$
\EndFunction
\end{algorithmic}
\end{alg}

The set $\operatorname{Max}_\subset\cE$ is the set of the elements of $\cE$ that are maximal with respect to inclusion. Evidently these are the facets of $F$, and this proves the correctness of the algorithm:\ for every computed face $F$ of $C$, the facets of $F$ are also computed, and no proper subset $\cF$ of the full face lattice contains $C$ and is closed under taking facets of $F\in \cF$. Each orbit representative $G' \in \operatorname{orbit}(G)$ is chosen so that $\HH(G')$ is lexicographically minimal.

\begin{remark}\label{r:facelatticealg}
Before we refine Algorithm~\ref{a:facelattice}, we make several comments.  
\begin{enumerate}[(a)]
\item 
The number $c$, in addition to counting rounds of execution in the \textbf{while} loop, equals the codimension of the faces produced in the current round, as follows immediately by induction on $c$: if the face $F$ has codimension $c$, then the facets of $F$ each have codimension $c+1$. (This property will be somewhat relaxed below.)

\item 
The outer \textbf{for} loop is parallelized in \texttt{Normaliz} using \texttt{OpenMP} \cite{openmp}, a standard shared memory parallelization library. All threads must access the bit vectors $\EE(H)$ and the list $\cN$, but $\cN$ must be protected against simultaneous access since it is potentially changed by at least one thread.  Nevertheless, parallelization with $16$~threads is very reasonable; for $m = 14$, an efficiency of $\approx 45\%$ per thread is reached.  For comparison, with $8$~threads the efficiency is $\approx 70\%$, and with $4$ threads it is $\approx 77\%$. (Note that such measurements depend heavily on the workload of the machine).  

\item 
A profiler run for $m = 14$ shows that the computation of $\EE(F)$ for faces $F$ uses only $\approx6\%$ of the execution time for the face lattice.  The most time consuming task is lexicographic comparison of bit vectors at about $40\%$, followed by some system routines. The inclusion check takes $\approx 4\%$ .  

\item 
For a given $F$, it is extremely common to have $F \cap H = F \cap H'$ for facets $H \neq H'$ of $C$.  Therefore, it is useful to produce the set $\cE$ first; we obtain each intersection only once and can select the facets of $F$ by testing inclusion. 

\item 
Algorithm~\ref{a:facelattice} is designed for cones with small or at most moderate numbers of facets. Via dualization, it can also be effectively applied to cones with a small or moderate number of extreme rays (though this is not yet implemented in \texttt{Normaliz}).

\item
Although we speak of the ``face lattice'' throughout, Algorithm~\ref{a:facelattice} does not compute the lattice structure. Indeed, this would be impossible given the order of magnitude of the number of faces of the Kunz cone.  The standard version of \texttt{Normaliz} writes an output file in which each face $F$ is represented by a $0$-$1$-vector representing the facets that contain $F$, together with the codimension of $F$. The lattice structure can be easily derived from this data.

\end{enumerate}
\end{remark}

An obvious weakness of Algorithm \ref{a:facelattice} is that it ignores the commutativity of the intersection $F_1\cap F_2$.  Utilizing this fact should enable one to reduce the number of pairs $(F,H)$ that must be touched by the algorithm. To some extent this is achieved by using the following proposition.

\begin{prop}\label{p:facetrestriction}
For cosimplicial faces $F=H_1\cap\dots\cap H_c$, $H_1,\dots,H_c\in\HH(C)$, $H_1<\dots<H_c$,  it is sufficient to run the loop ``\textbf{for~all} $H \in \HH(C)$ with $H \notin \HH(F)$'' over those facets satisfying $H > H_c$, and to simultaneously replace $\operatorname{Max}_\subset\cE$ with the subset $\{G \in \cE : G \text{ is a facet of } F \}$.
\end{prop}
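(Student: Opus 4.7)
The plan is to argue by induction on the codimension $c$ that Algorithm~\ref{a:facelattice}, with the modifications enabled for cosimplicial $F$, still produces a representative of every orbit in the face lattice of $C$. By Remark~\ref{r:facelatticealg}(a), the round in which codimension-$(c+1)$ faces are produced begins with $\cW$ containing orbit representatives of all codimension-$c$ faces. It therefore suffices to show that, for any face $G$ of codimension $c+1$, some parent $F_0 \supset G$ of codimension $c$ produces $G$ under its (modified or original) processing.

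The argument is dual. The face $G^* \subseteq C^*$ dual to $G$ is a pointed cone of dimension $c+1$ whose extreme rays correspond bijectively to $\HH(G) = \{H_{j_1} < \dots < H_{j_r}\}$ (with $r \geq c+1$), and whose facets correspond bijectively to the codimension-$c$ parents of $G$, with simplicial facets of $G^*$ corresponding to cosimplicial parents of $G$. Let $\rho_r$ denote the extreme ray of $G^*$ dual to $H_{j_r} = \max \HH(G)$. Since $G^*$ is pointed, the intersection of all its facets is $\{0\}$, so $\rho_r$ must fail to lie in some facet $F_0^*$ of $G^*$. The corresponding parent $F_0$ of $G$ then satisfies $\HH(F_0) \subseteq \HH(G) \setminus \{H_{j_r}\}$, and in particular $\max \HH(F_0) < H_{j_r}$. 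If $F_0$ is non-cosimplicial, its processing is unchanged and finds all of its facets, including $G$. If $F_0$ is cosimplicial, the modified loop at $F_0$ iterates over $H > \max \HH(F_0)$, which contains $H_{j_r}$; the intersection $F_0 \cap H_{j_r}$ has codimension $c+1$ (it is a proper subface of $F_0$ since $H_{j_r} \notin \HH(F_0)$) and contains $G$, so it must equal $G$, and is therefore a facet of $F_0$ picked up by the facet test.

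The second modification, replacing $\operatorname{Max}_\subset \cE$ by the explicit facet test, is needed because under the restricted loop $\cE$ can be devoid of facets of $F$ (they may all arise only from $H \le H_c$); in that case the set-maximal elements of $\cE$ are subfaces of codimension strictly greater than $c+1$, which belong to later rounds and must not be emitted now. The facet test discards them. The main obstacle in the proof is the situation in which every parent of $G$ is cosimplicial: then no parent enjoys the unrestricted loop, and one must still produce a cosimplicial parent whose maximal facet is strictly smaller than $H_{j_r}$. This is precisely what the pointed-cone lineality fact, combined with the duality dictionary between parents of $G$ and facets of $G^*$ (and between cosimpliciality and simpliciality), delivers.
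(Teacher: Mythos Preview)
Your duality argument is elegant and would be correct if the algorithm processed every codimension-$c$ face, but Algorithm~\ref{a:facelattice} only processes orbit representatives: $\cW$ contains $\min\operatorname{orbit}(F)$, not $F$ itself. You acknowledge this in your first paragraph, but the reduction ``it suffices to show that some parent $F_0 \supset G$ produces $G$'' is then unjustified. The parent $F_0$ you construct need not lie in $\cW$; only its orbit representative $\pi(F_0)$ is actually processed. When $\pi(F_0)$ is cosimplicial, its restricted loop runs over $H > \max\HH(\pi(F_0))$, and there is no reason $\pi(H_{j_r})$---or any other facet cutting out $\pi(G)$ from $\pi(F_0)$---satisfies this bound, since the automorphism $\pi$ permutes $\HH(C)$ without respecting the fixed ordering. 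Re-running your argument for $\pi(G)$ in place of $G$ produces a new parent $F_1$, which again may fail to be an orbit representative, and there is no evident termination.

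The paper handles precisely this obstruction by replacing induction on codimension with induction on the lexicographic order of $\HH(F)$ among cosimplicial orbit representatives $F$. The point is that passing to the orbit representative can only \emph{decrease} $\HH(\cdot)$ lexicographically (this is how the representative is chosen), and the diamond construction produces a sibling $F'$ of $F$ with $\HH(F') < \HH(F)$; together these give $\HH(\pi(F')) < \HH(F)$, so the inductive hypothesis applies to $\pi(F')$ and the facet $\pi(G)$ of $\pi(F')$ is found. Your proof is correct for the automorphism-free version of the algorithm, and the dual formulation via pointedness of $G^*$ is a nice alternative to the diamond property in that setting; but for the orbit-reducing algorithm actually under discussion you need an inductive parameter that is monotone under passage to orbit representatives, and codimension alone does not provide one.
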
 

\begin{proof}
Suppose $F$ is a face that has already been computed up to orbit. We must show that all facets $G$ of $F$ are computed up to orbit as well. Since all facets of~$C$ are evidently computed up to orbit, we can assume $\codim F\ge 1$. Set $E = H_1 \cap \cdots \cap H_{c-1}$ and $H = H_c$. 

We have $\codim G=\codim F+1=\codim E+2$. By the ``diamond property'' of the face lattice \cite[Theorem~2.7(c)]{Ziegler}, there exists exactly one more face $F'$ strictly between $E$ and $G$, which must be a facet of $E$ as well, meaning $F' = E \cap H'$ for some $H'\in \HH(C)$.  It follows that $F \cap H' = F' \cap H = F \cap F' = G$. The situation is depicted by Figure~\ref{f:diamond}.

If $H'>H$, then $G = F \cap H'$ is computed since $H'$ is not excluded by the condition in the proposition. In the case $H > H'$, however, we must be careful, since $F'$ has been computed only up to orbit. Let $\pi$ be the automorphism that maps $F'$ to the minimal face in its orbit. If $F'$ is not cosimplicial, then $\pi(F' \cap H) = \pi(F') \cap \pi(H)$ is computed since there is no restriction on the facets with which $\pi(F')$ is matched.  

If $F'$ is cosimplicial (necessarily of codimension $c$), then $\HH(\pi(F')) \le \HH(F')$ since we choose the lexicographically smallest face in the orbit. Moreover, $\HH(F') < \HH(F)$, since 
$$\max \HH(F') = \max(H_{c-1}, H') < H = \max \HH(F).$$
As such, we can assume $\pi(F') \cap \pi(H)$ is computed up to orbit by induction on the lexicographical order. It follows that $F \cap H$ is computed up to (the same) orbit, as desired.
\end{proof}

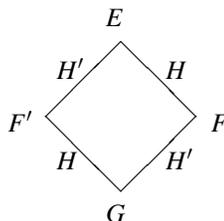
\begin{figure}[bt]
\unitlength1cm
\begin{footnotesize}
\begin{picture}(3,2.3)
\put(1,1){\line(1,1){1}}
\put(2,2){\line(1,-1){1}}
\put(1,1){\line(1,-1){1}}
\put(2,0){\line(1,1){1}}
\put(1.8,2.2){$E$}
\put(1.8,-0.4){$G$}
\put(0.5,0.8){$F'$}
\put(3.2,0.8){$F$}
\put(1.15,1.5){$H'$}
\put(2.6,1.5){$H$}
\put(1.15,0.3){$H$}
\put(2.6,0.3){$H'$}
\end{picture}
\end{footnotesize}
\caption{The diamond property in the proof of Proposition~\ref{p:facetrestriction}.}
\label{f:diamond}
\end{figure}

The inequality $\HH(F') < \HH(F)$ in the proof cannot be guaranteed (and rightfully is not used) if $F'$ is not cosimplicial. In the computation of orbits the main problem is to find invariants of the faces that behave equivariantly under the action of the automorphism group, or can at least be controlled with reasonable effort. In the computation of the Wilf cone, Proposition~\ref{p:facetrestriction} is already quite helpful because the number of cosimplicial faces (and their orbits) is rather high.  When $m = 14$, for instance, the Kunz cone has~$2,643,996$ cosimplicial orbits out of $3,506,961$.

The algorithm computes faces of codimension $c$ in round $c$ and not earlier (or later). There is, however, a catch in using Proposition \ref{p:facetrestriction}:\ one cannot select the facets of $F$ in $\operatorname{Max}_\subset\cE$ by checking inclusions unless all intersections $F \cap H$ with $H\in\HH(C)$ have been computed. In the Wilf version we proceed with $\operatorname{Max}_\subset\cE$ and allow the computation of a codimension $c$ face $F$ prior to round $c$. In order to avoid the re-computation of an orbit we look up $\cW\cup\cF$ before $F$ is added to $\cN$.

Table~\ref{tb:combinatorial} contains data on the Kunz cones and their face lattices. The increase in computation time with $m$ is not only due to the larger and larger face lattices, but also to the significant increase in the number of extreme rays.  We have computed their number for  $m = 19,20$ and $21$ to give a glimpse of the complexity one expects therein, let alone for higher values of $m$.  

\begin{table}
	\begin{tabular}{r|r|r|r|r|r|r|}
		$m$ &\# inequ&\# extr rays& \# orbits&\# bad orbits&\# faces&\# bad faces\\
		\hline
		\hline
		7&18&30  &71 &0&400&0\\ 
		\hline
		8&24&47&379&0&1,348&0\\
		\hline
		9&32&122& 1,104& 9 &6,508&54\\
		\hline
		10&40&225 &6,711&19& 26,682&74\\  
		\hline
		11& 50&   812&15,622&178&155,944&1,765\\
		\hline
		12&60& 1,864& 169,607 & 714& 669,794 & 2,791\\
		\hline
		13&72& 7,005& 365,881& 4,338& 4,389,234& 52,035\\
		\hline
		14&84&15,585& 3,506,961 & 15,251& 21,038,016 & 91,394\\
		\hline
		15&98&67,262& 17,217,534& 180,464& 137,672,474& 1,441,273\\
		\hline
		16&112&184,025& 94,059,396 & 399,380 & 751,497,188 & 3,184,022\\
		\hline
		17&128& 851,890&333,901,498 & 3,186,147 & 5,342,388,604 & 50,977,648\\
		\hline
		18& 144 & 2,158,379 & 4,712,588,473 & 17,345,725 & 28,275,375,292 & 104,071,319\\
		\hline
		19& 162 & 11,665,781 &?? & ?? & ?? & ??\\
		\hline
		20& 180 & 34,966,501  &?? & ?? & ?? & ??\\
		\hline	
		21& 200 & 169,543,084 &?? & ?? & ?? & ??\\
		\hline		
	\end{tabular}
	\vspace*{1.5ex}
	\caption{Combinatorial data of Kunz cones}
	\label{tb:combinatorial}
\end{table}

Table~\ref{tb:nmz_runtimes} gives execution times of the steps in the verification of Wilf's conjecture, and approximate values of the peak RAM usage.  The times listed for ``bad faces'' include the final transformations and the output times.  All runs were done with a parallelization of $32$~threads on a Dell R640 system with two Intel\texttrademark\ Xeon\texttrademark\ Gold 6152 (a total of $44$ cores) and 1 TB of RAM.  These times can vary quite a bit with the workload of the system. The~table indicates that both computation time and RAM usage are limiting factors in the computations.
\begin{table}
	\begin{tabular}{r|r|r|r|r|r|}
		$m$ & preparation& face lattice & bad faces & total time & $\approx$ RAM \\
		\hline
		\hline	
		11& --- & --- & --- & 0.7 s & 6 MB \\
		\hline
		12& --- & --- & --- & 2.0 s& 35 MB\\
		\hline
		13& 1 s& 2 s & 16 s& 19 s &  80 MB\\
		\hline
		14& 3 s & 20 s & 37 s & 1:0 m&  603 MB\\
		\hline
		15& 15 s & 3:335 m& 14 m& 17:59 m&  2.6 GB \\
		\hline
		16& 59 s& 54:39 m & 36 m & 1:30 h&  12 GB\\
		\hline
		17& 6:05 m& 19:35 h & 16:55 h& 36:36 h&  48 GB\\
		\hline
		18& 19:19 m& 27:13 d & 1:16 d    & 29:05 d &  720 GB\\
		\hline
	\end{tabular}
	\vspace*{1.5ex}
	\caption{\texttt{Normaliz} execution data in verifying Wilf's conjecture.}
	\label{tb:nmz_runtimes}
\end{table}

\begin{remark}\label{r:general_version}
An automorphism free version of the face lattice computation was released in \verb|Normaliz 3.7.0|.  Version \verb|3.8.0| will contain a substantially improved algorithm, which we forgo discussing at this point since (i)~it does not contribute to the Wilf computations, and (ii)~an algorithm by Kliem and Stump~\cite{K_and_S} posted to \verb|arXiv.org| after the first version of our paper appears to be faster than \verb|Normaliz 3.8.0|.  The Kliem and Stump algorithm differs greatly from ours, e.g.,\ by using a depth first search in the recursion and a faster implementation of bit vectors.  

\end{remark}

\section{Wilf's conjecture as a combinatorial game}
\label{sec:game}

The defining inequalities of any face in which Conjecture~\ref{conj:emptyregions} holds can be combined to yield Wilf's inequality.  We introduce a ``combinatorial game'' of sorts (Definition~\ref{d:wilfgame}), played with the facet description of each face (or, equivalently, the associated Kunz poset), the successful completion of which implies Wilf's conjecture holds for all numerical semigroups in that face (Theorem~\ref{t:wilfgame}).  

\begin{remark}\label{r:wilfgameadvantages}
Using the Wilf game in the pursuit of Wilf's conjecture has several potential advantages.  Some of the special classes of numerical semigroups for which Wilf's conjecture is known to hold have well-understood Ap\'ery posets (for instance, those in Corollary~\ref{c:wilfgame}).  The Wilf game provides a streamlined avenue for proving Wilf's conjecture in such cases, as the game only depends on the Ap\'ery posets of the underlying semigroup.  
Additionally, any counterexample to Wilf's conjecture must lie in a face for which the Wilf game is unwinnable, so one method of searching for such examples is by first locating such a poset.  Note that, using the machinery in Section~\ref{sec:algorithm}, once an unwinnable poset is located, \texttt{Normaliz} can be used to either locate an integer point whose corresponding numerical semigroup violates Wilf's conjecture, or verify computationally that none exist in the corresponding face.  
\end{remark}

\begin{defn}\label{d:wilfgame}
Fix an $m$-Kunz poset $P$ and a maximal element $f \in P$, and let $e - 1$ denote the number of minimal elements of $P$.  We define the \emph{Wilf game} of $P$, played on the set of all formal expressions $\sum_{i \in P} a_ix_i$ in variables $x_1, \ldots, x_{m-1}$ with $a_i \in \ZZ_{\ge 0}$.  A \emph{Wilf move} on a given expression is a replacement of the form 
$$x_i + x_{k-i} \to x_k$$
for some $i, k \in P$ with $i \prec k$.  The \emph{score} of a Wilf move $x_i + x_j \to x_k$ equals the sum~of:
\begin{enumerate}[(i)]
\item 
the net change in the number of summands ``to the right'' of $f$ (that is, variables whose index is greater than $f$), that is, the sum of
\begin{itemize}
\item 
$-1$ if $i > f$,

\item 
$-1$ if $j > f$, and

\item 
$+1$ if $k > f$;

\end{itemize}

\item 
$+1$ if $k < i$ (equivalently, if $k < j$); and

\item 
$+2$ if it is not one of the first $m - e$ moves performed.  

\end{enumerate}
A sequence of at least $m - e$ Wilf moves starting on the expression $ex_1 + \cdots + ex_{m-1}$ with initial score $m - 1 - f$ (that is, the number of distinct variables ``to the right'' of $f$) is~said to \emph{win the Wilf game} if the net score is non-negative.  
\end{defn}

\begin{example}\label{e:zero-wilfgame}
Let $S = \<3,5,7\>$, whose Ap\'ery poset $\mathcal{P}(S)$ has two elements, both of which are maximal. As such, there are no available Wilf moves.  However, $\operatorname m(S) - \operatorname e(S) = 0$, so the game is won with zero moves, as the initial score is either $3 - 1 - 1$ or $3 - 1 - 2$, both of which are nonnegative.

Notice that the same behavior occurs for every maximal embedding dimension numerical semigroup $S$, since in this case $\mathcal{P}(S)$ consists of $\operatorname{e}(S)-1$ incomparable elements (each both maximal and minimal). The Wilf game is won with zero moves for each maximal element, the net score being $m - 1 - f \ge 0$ for every $f\in (\ZZ/(m))^*$.
\end{example}

\begin{example}\label{e:wilfgame}
Let us return to the numerical semigroup $S = \<6, 9, 20\>$ from Example~\ref{e:mcnugget} (here, $m = 6$, $e = 3$, and $f = 1$).  The Wilf game on $P$ can be won with the sequence
$$\begin{array}{rcl@{}c@{}l@{}c@{}l@{}c@{}l@{}c@{}l}
3x_1 + 3x_2 + 3x_3 + 3x_4 + 3x_5
&\to& 4x_1 &{}+{}& 2x_2 &{}+{}& 3x_3 &{}+{}& 3x_4 &{}+{}& 2x_5 \\
&\to& 5x_1 &{}+{}& \phantom{1}x_2 &{}+{}& 3x_3 &{}+{}& 3x_4 &{}+{}& \phantom{1}x_5 \\
&\to& 6x_1 &{}+{}&  &{} {}& 3x_3 &{}+{}& 3x_4 &{} {}&  \\
&\to& 7x_1 &{}+{}&  &{} {}& 2x_3 &{}+{}& 2x_4 &{} {}&  \\
&\to& 8x_1 &{}+{}&  &{} {}& \phantom{1}x_3 &{}+{}& \phantom{1}x_4 &{} {}&  \\
&\to& 9x_1 &{} {}&  &{} {}&  &{} {}&  &{} {}&  \\
\end{array}$$
of 2 distinct Wilf moves ($x_2+x_5\to x_1$ and $x_3+x_4\to x_1$) each applied 3 times.  The net score is computed as follows:
\begin{enumerate}[(i)]
\item 
$-12$ points, since each Wilf move results in 2 less summands to the right of $f = 1$;

\item 
$+6$ points, since each move results in a variable with smaller index; and

\item 
$+4$ points for the extra $2$ moves beyond the initial $m - e = 4$.  

\end{enumerate}
The net score is thus $(m - 1 - f) - 12 + 6 + 4 = 2$.  Had each Wilf move been performed only twice, the net score would be $0$, still enough to win the Wilf game.  
Since $S$ is symmetric, $f$~is the unique maximal element of the Ap\'ery poset, and the strategy employed above (move everything directly to $x_f$) is precisely the one used in the proof of Corollary~\ref{c:wilfgame}(a).  
\end{example}

\begin{thm}\label{t:wilfgame}
Fix a face $F \subset P_m$ with corresponding Kunz poset $P$, and let $e = \operatorname{e}(F)$.  If, for each maximal element $f$, $P$ has a winning sequence of moves, then Conjecture~\ref{conj:emptyregions} (and thus Wilf's conjecture) holds for every numerical semigroup in $F^\circ$.  
\end{thm}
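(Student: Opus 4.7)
The plan is to show that any winning sequence of Wilf moves, combined with the $f$-maximality inequalities, produces a non-negative lower bound on the slack $(e-1)(\operatorname F(S)+1) - e\,\operatorname g(S)$ of Wilf's inequality, so that net score $\ge 0$ forces the slack to be $\ge 0$.

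Fix $S \in F^\circ$ and choose $f$ with $m x_f + f = \max_i (m x_i + i)$, so that $\operatorname F(S) = m x_f + f - m$ by Lemma~\ref{l:genusandfrob} and $f$ is a maximal element of $\mathcal P(F) = \mathcal P(S)$. The first key observation is an invariant: each Wilf move $x_i + x_{k-i} \to x_k$ with $i \prec k$ in $\mathcal P(F)$ is the substitution associated with a defining equation of $F$, namely $x_i + x_{k-i} = x_k$ when $k > i$ (non-wrap-around) and $x_i + x_{k-i+m} = x_k - 1$ when $k < i$ (wrap-around). Writing $E_t$ for the expression obtained after $t$ moves and $r_t$ for the number of wrap-arounds among them, it follows that $E_t(S) = e\,\operatorname g(S) + r_t$ on $F^\circ$, while the total coefficient sum of $E_t$ equals $e(m-1) - t$.

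The second ingredient is $f$-maximality: $m x_f + f \ge m x_i + i$ combined with integrality of the Kunz coordinates forces $x_i \le x_f$ for $i < f$ and $x_i \le x_f - 1$ for $i > f$. After a winning sequence of $M \ge m - e$ moves with $r$ wrap-arounds, writing $E_M = \sum_i a_i x_i$ and $R_M = \sum_{i > f} a_i$ and using $\sum_i a_i = e(m-1) - M$, these inequalities yield $E_M(S) \le (e(m-1) - M)\,x_f - R_M$. Combining with the identity $E_M(S) = e\,\operatorname g(S) + r$ gives
\[
(e-1)(\operatorname F(S)+1) - e\,\operatorname g(S) \;\ge\; (M - (m - e))\,x_f + R_M + r - (e-1)(m - 1 - f).
\]

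Finally, unpacking Definition~\ref{d:wilfgame}, the net game score equals $-(e-1)(m-1-f) + R_M + r + 2\max(0, M - (m-e))$: score (i) telescopes to $R_M - e(m-1-f)$, score (ii) sums to $r$, and score (iii) to $2\max(0, M-(m-e))$. The winning hypothesis therefore reads $R_M + r \ge (e-1)(m-1-f) - 2\max(0, M - (m-e))$, and substituting into the previous display collapses the right-hand side to $(M - (m-e))(x_f - 2)$, which is non-negative whenever $M = m - e$ or $x_f \ge 2$. The remaining case $x_f = 1$ forces $f = m - 1$ and $x_i = 1$ for all $i$ (since $mx_i+i \le m+f$ with $i>f$ would give $x_i<1$), so $S$ has maximal embedding dimension; then $\mathcal P(F)$ has no strict comparabilities and no Wilf moves exist, forcing $M = 0 = m - e$ and landing back in the first case. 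The main obstacle will be setting up the two invariants in the first two steps --- in particular, the $+2$ bonus in score (iii) appears to be calibrated precisely so that $(M - (m-e))\,x_f$ is replaced by $(M - (m-e))(x_f - 2)$, which is harmless once $x_f \ge 2$.
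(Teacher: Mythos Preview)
Your proof is correct and follows essentially the same route as the paper's: establish the invariant $E_M(S)=e\,\operatorname g(S)+r$, bound $E_M(S)$ above using $x_i\le x_f$ for $i<f$ and $x_i\le x_f-1$ for $i>f$, and then match the resulting lower bound on the Wilf slack against the net game score. Your bookkeeping (with $M$, $r$, $R_M$) corresponds exactly to the paper's $s_3=M-(m-e)$, $s_2=r$, $s_1=R_M$. One point worth noting: the paper passes from $-s_1-s_2-s_3x_f$ to $-s_1-s_2-2s_3$ without comment, implicitly using $x_f\ge 2$; your explicit treatment of the degenerate case $x_f=1$ (forcing all $x_i=1$, hence $F$ the improper face with $e=m$ and $M=0$) is a genuine, if small, improvement in rigor.
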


\begin{proof}
Fix a positive integer vector $(x_1, \ldots, x_{m-1}) \in F^\circ$ corresponding to the Kunz coordinates of some numerical semigroup $S$, and let $f \in P$ so that $mx_f + f = \operatorname{F}(S)$.  Rearranging the inquality~\eqref{eq:wilf2}, we must show 
$$ex_1 + \cdots + ex_{m-1} - m(e - 1)x_f + (e - 1)(m - 1 - f) \le 0.$$
Each Wilf move on $ex_1 + \cdots + ex_{m-1}$ corresponds to an equality of the form 
$$x_i + x_j = x_{i+j} \qquad \text{ or } \qquad x_i + x_j = x_{i+j-m} - 1$$
satisfied by all points in $F^\circ$.  Let $a_1x_1 + \cdots + a_{m-1}x_{m-1}$ denote the final expression resulting from some winning sequence of Wilf moves on $P$, let $s_3$ denote the number of Wilf moves used beyond the first $m - e$, let $s_2$ denote the number of Wilf moves of the latter form above, and let $s_1 = a_{f+1} + \cdots + a_{m-1}$, so that the net score is 
$$s_1 - (e-1)(m - 1 - f) + s_2 + 2s_3 \ge 0.$$
Using the fact that $x_i \le x_f$ for $i \le f$ and $x_i \le x_f - 1$ for $i > f$, we obtain
$$\begin{array}{r@{}c@{}l}
ex_1 + \cdots + ex_{m-1} - m(e - 1)x_f &{}={}&  -s_2 + a_1x_1 + \cdots + a_{m-1}x_{m-1} - m(e - 1)x_f \\
&{}\le{}& -s_1 - s_2 + (a_1 + \cdots + a_{m-1})x_f - m(e - 1)x_f \\
&{}={}& -s_1 - s_2 + (a_1 + \cdots + a_{m-1} - m(e-1))x_f \\
&{}={}& -s_1 - s_2 + (e(m - 1) - (m - e) - s_3 - m(e - 1))x_f \\
&{}={}& -s_1 - s_2 - s_3x_f \\
&{}\le{}& -s_1 - s_2 - 2s_3 \\
&{}\le{}& -(e - 1)(m - 1 - f),
\end{array}$$
from which the desired inequality immediately follows.  
\end{proof}

To demonstrate the utility of Theorem~\ref{t:wilfgame}, we rederive some known results.  

\begin{cor}\label{c:wilfgame}
The Wilf game is winnable on the Apery poset of a semigroup $S$ if
\begin{enumerate}[(a)]
\item 
$S$ is symmetric or

\item 
$S$ has maximal embedding dimension.  
\end{enumerate}
\end{cor}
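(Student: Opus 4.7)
The plan is to handle the two cases separately.

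For part (b), maximal embedding dimension is equivalent to $\operatorname{e}(S) = m$, which by Corollary~\ref{c:withinfaces} forces $\mathcal P(S)$ to be an antichain on $m - 1$ elements: every element is both minimal and maximal. No Wilf move is available, the requirement is only $m - e = 0$ moves, and for any maximal $f$ the initial score $m - 1 - f$ is already nonnegative. The empty sequence of moves therefore wins, exactly as in Example~\ref{e:zero-wilfgame}.

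For part (a), $S$ being symmetric means $\operatorname{t}(S) = 1$, so Corollary~\ref{c:withinfaces} guarantees a unique maximal element $f \in \mathcal P(S)$. Every non-$f$ element $i$ satisfies $i \prec f$, and Definition~\ref{d:kunzposet} then forces $f - i \bmod m$ to satisfy $f - i \prec f$ as well. Hence the move $x_i + x_{f - i} \to x_f$ (indices taken modulo $m$) is always a legal Wilf move. My strategy is the one previewed in Example~\ref{e:wilfgame}: pair each non-$f$ element $i$ with $\sigma(i) := f - i \bmod m$ under the resulting involution $\sigma$ on $\mathcal P(S) \setminus \{f\}$, applying the associated move $e$ times on each $2$-cycle of $\sigma$, and $\lfloor e/2 \rfloor$ times on each fixed point (those $i_0$ with $2 i_0 \equiv f \bmod m$).

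The main step is verifying the winning condition
\[
s_1 - (e - 1) g + s_2 + 2 \bigl( N - (m - e) \bigr) \ge 0, \qquad g = m - 1 - f,
\]
extracted from the proof of Theorem~\ref{t:wilfgame}. Pairs with both elements below $f$ give first-type (``Case A'') moves scoring $0$ apiece from criteria (i)--(ii); pairs with both elements above $f$ give second-type (``Case B'') moves scoring $-2 + 1 = -1$ apiece. Counting element-uses yields $N = e(m - 2)/2$, $s_2 = e g / 2$, and $s_1 = 0$, up to $O(1)$ corrections from the fixed points of $\sigma$ when $e$ is odd. Plugging in and cancelling telescopes everything to
\[
(e - 2)\bigl( m - g / 2 \bigr) \ge 0,
\]
which holds for every $e \ge 2$ because $m - g/2 = (m + f + 1)/2 > 0$. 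The same computation gives $N - (m - e) = m(e - 2)/2 \ge 0$, so the ``at least $m - e$ moves'' clause of Definition~\ref{d:wilfgame} is also automatic.

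The main obstacle is the bookkeeping around the fixed points of $\sigma$: there is one in Case A exactly when $f$ is even, one in Case B exactly when $m \equiv f \bmod 2$, and for odd $e$ a Case B fixed point $i_0$ leaves a residual $a_{i_0} = 1$ that shifts $s_1$, $s_2$, and $N$ each by bounded amounts. All such $O(1)$ corrections are absorbed by the slack $(e - 2)(m - g / 2) \ge (m + 2)/2 \ge 5/2$ available whenever $e \ge 3$, which covers every odd-$e$ case since numerical semigroups always satisfy $e \ge 2$ (and $e = 2$ falls under the clean even case, so no correction is needed there).
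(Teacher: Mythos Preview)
Your treatment of part~(b) matches the paper's. For part~(a) you and the paper use the same family of moves $x_i + x_{f-i} \to x_f$; the difference is only in how many times each move is applied. You follow Example~\ref{e:wilfgame} and apply them $e$ times per $2$-cycle (and $\lfloor e/2\rfloor$ times at fixed points) so as to exhaust the non-$f$ variables, which forces the parity and fixed-point bookkeeping at the end. The paper instead applies the move once for each index $i \ne f$, giving exactly $m-2$ moves total and leaving every non-$f$ coefficient at $e-2 \ge 0$. With that choice the score collapses cleanly: the $g = m-1-f$ moves with $i > f$ each score $-1$ from criteria (i)--(ii), exactly cancelling the initial score $g$, and the $(m-2)-(m-e)=e-2$ extra moves contribute $2(e-2) \ge 0$; no casework on fixed points or on the parity of $e$ is needed. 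Your route is also correct, but the final ``$O(1)$ corrections are absorbed'' step is asserted rather than verified. Writing it out, the correction to the net score for odd $e$ is $-q_A - q_B/2 \ge -3/2$ and the correction to $N-(m-e)$ is $-(q_A+q_B)/2 \ge -1$ (with $q_A, q_B \in \{0,1\}$ counting the Case~A and Case~B fixed points), both indeed within your slack of $(m+2)/2 \ge 5/2$ and $m/2 \ge 3/2$ respectively.
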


\begin{proof}
Let $P$ denote the Ap\'ery poset of $S$, and let $m$, $e$, and $f$ be defined as before.  If~$S$ is symmetric, then $\operatorname{t}(S) = 1$, so $f$ is the unique maximal element of $P$.  As such, performing the move $x_i + x_{f - i} \to x_f$ for each $i \ne f$ yields a sequence of $m - 2$ moves.  Since $i < f$ if and only if $f - i < f$, each such move scores 0 (if $i < f$) or $-1$ (if $i > f$).  Combined with the starting score of $m - 1 - f$ and any additional points earned for extra moves, this yields a net non-negative score, thereby winning the Wilf game.  

The maximal embedding dimension case has been already treated in Example \ref{e:zero-wilfgame}.  
\end{proof}

\section{Acknowledgements}

The second author is supported by the project MTM2017--84890--P, which is funded by Ministerio de Econo\'ia y Competitividad and Fondo Europeo de Desarrollo Regional FEDER, and by the Junta de Andaluc\'ia Grant Number FQM--343.  The third author received support from the AMS-Simons travel grant to visit the second and fourth author's home institutions.  
The authors would also like to thank Eduardo Torres-Davila for his helpful comments.  


\end{document}